\newtheorem{theoreme}{Theorem}[section]
\newtheorem{lemme}{Lemma}[section]
\newtheorem{proposition}{Proposition}[section]
\newtheorem{corollaire}{Corollary}[section]
\newtheorem{conjecture}{Conjecture}[section]
\theoremstyle{definition}
\newtheorem*{remarque}{Remark}
\numberwithin{equation}{section}
\def\Q{\mathbb{ Q}}
\def\N{\mathbb{N}} 
\def\Z{\mathbb {Z}}
\def\Norm{{\rm N}}
\def\rmh{\mathrm{h}}
\def\BigOh{O}
\newcounter{compteurkappa} 
\def\Newcst#1{
\refstepcounter{compteurkappa}
\kappa_{ 
\arabic{compteurkappa}}
\label{#1}
}
\def\cst#1{\kappa_{\ref{#1}}}
\def\boxit#1#2{\setbox1=\hbox{\kern#1{#2}\kern#1}%
\dimen1=\ht1 \advance\dimen1 by #1 \dimen2=\dp1 \advance\dimen2 by #1
\setbox1=\hbox{\vrule height\dimen1 depth\dimen2\box1\vrule}%
\setbox1=\vbox{\hrule\box1\hrule}%
\advance\dimen1 by .4pt \ht1=\dimen1
\advance\dimen2 by .4pt \dp1=\dimen2 \box1\relax} 
\begin{document}
 
\def\refname{\centerline{Bibliographie}}
\title[A family of Thue equations]{A family of Thue equations involving powers of units of the simplest cubic fields}

\author{\sc Claude Levesque} 
\address{Claude Levesque\\
D\'ep. de math\'{e}matiques et de statistique\\
Universit\'{e} Laval, Qu\'{e}bec (Qu\'{e}bec)\\
CANADA G1V 0A6}
\email{cl@mat.ulaval.ca}

\author{\sc Michel Waldschmidt}
\address{Michel Waldschmidt\\
UPMC Univ Paris 06, UMR 7586-IMJ\\
F--75005 Paris France 
}

\email{michel.waldschmidt@imj-prg.fr}
\urladdr{http://www.imj-prg.fr/~michel.waldschmidt/}

\maketitle

\begin{resume}
E. Thomas fut l'un des premiers \`a r\'esoudre une famille infinie
 d'\'equations de Thue, lorsqu'il a consid\'er\'e les formes 
$F_n(X, Y )= X^3 -(n-1)X^2Y -(n+2)XY^2 -Y^3$
et la famille d'\'equations $F_n(X, Y )=\pm 1$, $n\in \N$. 
Cette famille est associ\'ee \`a la famille des corps cubiques les plus simples 
$\Q(\lambda)$ de D.~Shanks,
$\lambda$ \'etant une racine de $F_n(X,1)$. Nous introduisons dans cette famille
un second param\`etre en rempla\c cant les racines du polyn\^ome minimal
$F_n(X, 1) $ de $\lambda$ par les puissances $a$-i\`emes des racines 
et nous r\'esolvons de fa\c con effective la famille d'\'equations de Thue 
que nous obtenons et qui d\'epend maintenant des deux param\`etres $n$ et $a$.

\end{resume}

\begin{abstr}{\sl A family of Thue equations involving 
powers of units of the simplest cubic fields}
 
 E. Thomas was one of the first to solve an infinite family 
of Thue equations, when he considered the forms 
$F_n(X, Y )= X^3 -(n-1)X^2Y -(n+2)XY^2 -Y^3
$
and the family of equations $F_n(X, Y )=\pm 1$, $n\in \N$. 
This family is associated to the family of the simplest cubic fields $\Q(\lambda)$ of D.~Shanks,
$\lambda$ being a root of $F_n(X,1)$. We introduce in this family a second parameter by replacing the roots of the minimal polynomial $F_n(X, 1) $ of $\lambda$ by the $a$-th powers of the roots and we effectively solve the family of Thue equations that we obtain and which depends now on the two parameters $n$ and $a$. 
 \end{abstr}
 
  \bigskip
 \noindent
{\tt Key words:} Simplest cubic fields, family of Thue equations, diophantine equations, linear forms of logarithms.

\bigskip
 \noindent
{\tt AMS Classification: } Primary: 11D61, $\quad$ Secondary: 11D41, 11D59.

\section{Introduction}

In 1990, E.~Thomas \cite{T} considered the family $(F_n)_{n\ge 0}$ of binary cubic irreducible forms $F_n(X, Y )\in\Z[X,Y]$ associated to the simplest cubic cyclic fields of D.~Shanks, namely 
$$
F_n(X, Y )= X^3 -(n-1)X^2Y -(n+2)XY^2 -Y^3 .
$$
In particular, Thomas proved in some effective way that the set of $(n,x,y)\in\Z^3$ with 
$$
n\ge 0, \quad 
\max\{|x|, |y|\} \ge 2 
\quad\hbox{and}\quad F_n(x,y)=\pm 1
$$
 is finite. 
In his paper, he completely solved the case $n\ge 1.365\cdot 10^7$. In 1993, M.~Mignotte \cite{M} completed the work of Thomas by solving the problem for each $n$ (see \S\ref{Section:NumericalCalcutation}). 
In 1996, M.~Mignotte, A.~Peth\H{o} and F.~Lemmermeyer \cite{MPL} gave, for $m\ge 1$ and $n\ge 1650$, an explicit upper bound of $|y|$ when $x,y$ are rational integers verifying
 $$
 0< |F_n(x,y)|\leq m.
 $$ 
Theorem 1 of \cite{MPL} implies, for $n\ge 2$, 
 $$
 \log |y|\le \Newcst{MPL} (\log n)(\log n+\log m)
 $$
 with an absolute constant $\cst{MPL}>0$. 

\smallskip

Following \cite{T} (compare with \cite{LPV}), let us note 
$$
\lambda_0, \quad \lambda_1=-\frac{1}{\lambda_0+1}
 \quad\hbox{and}\quad
 \lambda_2=-\frac{\lambda_0+1}{\lambda_0}\cdotp
 $$
the real roots of the polynomial $f_n(X)=F_n(X,1)\in\Z[X]$ with the convention $$
\lambda_0>0>\lambda_1>-1>\lambda_2.
$$
 These numbers depend on $n$, but no index $n$ appears to avoid a heavy notation.
 
 \smallskip

When $a$ is a nonzero rational integer, the binary cubic form 
 $$
 F_{n,a}(X,Y)=(X-\lambda_0^a Y)(X-\lambda_1^a Y)(X-\lambda_2^a Y)
$$
is irreducible in $\Z[X,Y]$, the minimal (irreducible) polynomial of $\lambda_0^a$ being $F_{n,a}(X,1)$. 

\smallskip

The purpose of this paper is to prove the following theorem.

\begin{theoreme}\label{Theoreme:principal} Let $m\in \N=\{1,2,\dots\}$.
There exist some absolute effectively computable constants 
$\Newcst{theoreme:m=1}$, $\Newcst{theoreme:2}$, $\Newcst{kappa:agrand}$ and $\Newcst{kappa:xypetits}$ such that if there exists $(n,a,m,x,y)\in\Z^5$ with $a\neq 0$ verifying
$$
0<|F_{n,a}(x,y)|\leq m,
$$
then the following properties hold true: 

{\rm (i)} When $m= 1$ and 
$\max\{|x|, |y|\}\geq 2$, 
we have
$$
\max\{|n|, |a|, |x|, |y|\}\leq \cst{theoreme:m=1}.
$$

{\rm (ii)} 
We have 
$$
\log \max\{|x|, |y|\}\leq \cst{theoreme:2} \mu
$$
with 
$$
\mu=\left\{
\begin{array}{ll}
(\log m+ |a| \log |n|)(\log |n|)^2 \log \log |n|
&\mbox{for $|n|\ge 3$,}
\\[2mm]
\log m+ |a| &\mbox{for $n=0,\pm1,\pm 2$.}
\end{array}
 \right.
$$ 

{\rm (iii)} When $n\ge 0$, $a\ge 1$ and 
$|y|\ge 2\root 3\of m$, 
we have
$$
a \leq \cst{kappa:agrand} \mu'
$$
with 
$$
 \mu'=
 \begin{cases}
 (\log m+\log n) (\log n) \log\log n
 &\mbox{for $n\ge 3$,}
\\[2mm]
 1+ \log m &\mbox{for $n=0, 1, 2$.}
\end{cases}
 $$

{\rm (iv)} When $xy\not=0$, $n\ge 0$ and $a\ge 1$,
 we have
$$
a \leq \cst{kappa:xypetits} 
\max\left \{
1,\; (1+\log |x|)\log\log(n+3),\; \log|y|,\; \frac{\log m}{\log(n+2)}
\right\}.
 $$

\end{theoreme}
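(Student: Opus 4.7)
My approach is the standard Baker-theoretic reduction of a parametric Thue inequality to a small linear form in logarithms in the simplest cubic field $K=\Q(\lambda_0)$, but with the exponent $a$ treated as a second variable on the same footing as $\log|y|$. Given a solution $(n,a,x,y)$ with $\mu:=F_{n,a}(x,y)$ satisfying $|\mu|\leq m$, set $\beta_i = x - \lambda_i^a y$ for $i=0,1,2$, so $\beta_0\beta_1\beta_2=\mu$. Since $\lambda_0\lambda_1\lambda_2=1$, the conjugates $\lambda_i^a$ are units of $\mathcal{O}_K$, and $\lambda_0,\lambda_1$ form a fundamental system of units for $|n|$ large enough. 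The Siegel-type identity
\[
(\lambda_1^a-\lambda_2^a)\beta_0 + (\lambda_2^a-\lambda_0^a)\beta_1 + (\lambda_0^a-\lambda_1^a)\beta_2 = 0,
\]
after division by the largest of the three summands, yields a 2-variable linear form $\Lambda$ in logarithms of elements of $K$.

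Because one of the $\lambda_i^a$ must be very close to the ratio $x/y$, the corresponding $\beta_i$ is of size $\ll m/|y|^2$, so $|\Lambda|$ admits an upper bound of order $\exp(-c\log|y|)$. On the other hand, a Matveev-type lower bound (as in \cite{MPL}) applies to $\Lambda$ with heights of the constituent algebraic numbers of order $|a|\log|n|$. Comparing the two estimates produces (ii) in the shape $\log\max\{|x|,|y|\}\ll(\log m + |a|\log|n|)(\log|n|)^2\log\log|n|$ for $|n|\geq 3$, while for $|n|\in\{0,1,2\}$ the underlying cubic field is fixed or degenerates, and sharper one-logarithm estimates of Laurent--Mignotte--Nesterenko type give the linear bound $\log m+|a|$. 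For (iii) and (iv) the roles are exchanged: the assumption $|y|\geq 2\sqrt[3]{m}$, respectively $xy\neq 0$, forces $|\lambda_0^a - x/y|\ll m/|y|^3$, which produces a small linear form in the two logarithms $\log\lambda_0$ (of height $\asymp\log n$) and $\log(x/y)$ (of height $\asymp\log\max\{|x|,|y|\}$). A direct application of Baker's theorem to this form bounds $a$ by the announced quantity. Part (i), the case $m=1$, then follows by substituting the bound of (iv) into (ii): (iv) makes $a$ logarithmic in $\max(|x|,|y|)$ and (ii) with $m=1$ makes $\log\max(|x|,|y|)$ linear in $|a|$, so the two inequalities jointly bound everything absolutely; the remaining small values of $n$ reduce to the classical Thue equations solved in \cite{T,M}.

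The delicate step is maintaining the dependence on $a$ linear rather than quadratic in (ii). A naive application of Baker's inequality to a linear form whose base numbers both have heights of order $|a|\log|n|$ produces a factor of $a^2$, whereas the announced bound is linear in $|a|$. To achieve this, the Siegel identity must be set up so that the dominant logarithm has a base of height $\asymp\log|n|$ alone (namely $\lambda_0$ or $\lambda_1$), while a single additional logarithm carries the factor $|a|$, which enters only linearly, through its integer coefficient. This book-keeping, and its compatibility with the exchange of roles performed in (iii)--(iv), is the main technical obstacle, together with the separate treatment of the degenerate ranges $|n|\leq 2$ where the unit rank or number-field structure is non-generic.
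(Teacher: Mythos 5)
Your outline captures the broad architecture — Siegel identity, a small linear form in logarithms over the simplest cubic field, Baker's theorem — and you correctly identify the central difficulty of keeping the $a$-dependence linear in (ii). But two of the four parts as you propose them have genuine gaps.

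\textbf{Part (iii).} You propose the same two-logarithm form $a\log\lambda_0-\log(x/y)$ for both (iii) and (iv), with heights $\asymp\log n$ and $\asymp\log\max\{|x|,|y|\}$. That gives a bound of the shape $a\ll\log\max\{|x|,|y|\}\cdot\log C$, which is exactly what (iv) asserts — but the conclusion of (iii) is $a\ll(\log m+\log n)(\log n)\log\log n$, with no dependence on $x$ or $y$ at all. Your form cannot deliver this. The paper's mechanism is different: after writing $\gamma_{i,a}=\delta_i\lambda_0^A\lambda_2^B$ (via a normalisation lemma of Mignotte–Peth\H{o}–Lemmermeyer), it observes that the ratio of $\lambda$-differences appearing in the Siegel identity equals $\pm(\lambda_0/\lambda_2)^{\pm a}(1+\BigOh(\lambda_2^a/\lambda_0^a))$, so the whole linear form can be rewritten as $(A'\pm a)\log\lambda_0+(B'\mp a)\log|\lambda_2|+\log|\delta_{i}/\delta_{j}|+\BigOh(\cdot)$. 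In this reshuffled form the three base numbers have heights $\asymp\log n$, $\asymp\log n$ and $\asymp\log m+\log n$, and the unknown $a$ has migrated into the \emph{integer coefficients}, where Baker's theorem only charges a $\log$. Together with the bound on $|A|+|B|$ from part (ii), this yields a bound on $a$ that is independent of $|x|,|y|$. Without this rearrangement the $a$-dependence in the height of $\mu$ ruins the estimate.

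\textbf{Part (i).} You propose bounding everything by substituting the output of (iv) into (ii). This does not close, because for $m=1$ the two inequalities are $a\ll(1+L)\log\log(n+3)$ and $L\ll a(\log n)^3\log\log n$, where $L=\log\max\{|x|,|y|\}$; substituting gives $L\ll(1+L)\log\log(n+3)\cdot(\log n)^3\log\log n$, and the factor on the right is $>1$ for large $n$, so no bound on $L$ (and in particular no bound on $n$) results. The paper bounds $n$ by an entirely separate argument: for $2\le a\le n/(\log n)^4$, it first deduces from (ii) that $|A|+|B|$ is $o(n/\log n)$, then (Corollary $\ref{corollaire:A-2B}$ and the ensuing case analysis) that the exponents $A,B$ must satisfy rigid linear relations whose only consistent solutions force $\log y$ to be $\le 0$, a contradiction with $y\ge 2$; while for $a>\kappa(\log n)^2\log\log n$ part (iii) already gives a contradiction. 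The union of these two $a$-ranges covers all $a\ge 2$ once $n$ is large, so $n$ is bounded, and then $a$ is bounded too. This combinatorial step — turning near-vanishing of a linear form in $\log\lambda_0$ and $\log|\lambda_2|$ into exact integer identities among $A,B,a$ — is missing from your plan.

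A minor point: the linear form produced in (ii) has \emph{three} terms, $\log\lambda_0$, $\log|\lambda_2|$, and $\log\mu$ with $\mu=(\delta_{i_1}/\delta_{i_2})\cdot\bigl((\lambda_{i_2}^a-\lambda_{i_0}^a)/(\lambda_{i_1}^a-\lambda_{i_0}^a)\bigr)$ of height $\asymp\log m+a\log n$; the $a$ enters through that single height with coefficient $1$, not through an integer coefficient. Your sentence about ``a single additional logarithm carries the factor $|a|$ through its integer coefficient'' describes the mechanism of (iii), not (ii).
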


Part (i) of Theorem $\ref{Theoreme:principal}$ extends the result of Thomas to the family of Thue equations 
 $ F_{n,a}(X,Y)=\pm 1$. Part (ii) generalizes the result of Mignotte, Peth\H{o} and Lemmermeyer to the family of Thue inequations $ |F_{n,a}(X,Y)|\le m$ (though in the case $a=1$ our bound for $|y|$ in (ii) is not as sharp as the one of \cite{MPL}). Moreover, the part (iii) of our theorem takes care of the family of all Thue inequations $ |F_{n,a}(X,Y)| \leq m$ in the case where $|a|$ is sufficiently large with respect to $n$. 

\smallskip
Since 
$$
F_{n,1}(X,Y)=F_n (X,Y)\quad \mbox{and} \quad F_{n,-1}(X,Y)=-F_n (Y,X),
$$
when $a=\pm 1$, the part (i) of Theorem $\ref{Theoreme:principal}$ boils down to the theorem of \cite{T} quoted 
above.

\smallskip
The result of Thomas \cite{T} shows that there is only a finite set of triples $(n,x,y)$ 
such that $\max\{|x|, |y|\} \ge 2$ and $x-\lambda_0 y$ is a unit. The part (i) of Theorem $\ref{Theoreme:principal}$ 
shows that there is only a finite set of quadruples $(n,a,x,y)$ 
with $\max\{|x|, |y|\} \ge 2$ and $ a\not=0 $ such that $x-\lambda_0^a y$ is a unit and our method provides an algorithm for exhibiting them all. 
 The assumption $\max\{|x|, |y|\}\geq 2$ in (i) cannot be omitted, since $F_{n,1}(1,-1)=1$ for all $n\ge 0$.
 
 \smallskip

 When $a=\pm 1$, the part (ii) of Theorem $\ref{Theoreme:principal}$ is a consequence of the Theorem 1 of \cite{MPL} quoted above.

\smallskip
The proof of the part (i) of Theorem 1, in the case
$$ 
2\le |a| \le \frac{{|n|}}{(\log |n|)^4},
$$
will be done in parallel with the proof of the part (ii), while 
in the case where 
 $|a|$ is sufficiently large with respect to $|n|$, namely
$$
\mbox{$|a|> \cst{kappa:agrand} (\log |n|)^2 \log\log |n|$ 
for $|n|\ge 3 \quad$ and $\quad |a|> \cst{kappa:agrand}$ for $n=0,\pm 1,\pm 2$},
 $$
the parts (i) and (ii) of Theorem 1 are consequences
 of the part (iii).

\smallskip

The proof of the part (iv) of Theorem $\ref{Theoreme:principal}$ also involves a Diophantine 
argument which will be given in \S$\ref{Section:Demonstration(iv)}$. 
\smallskip

 Under the hypotheses of Theorem $\ref{Theoreme:principal}$, assuming $xy\not=0$ and $|a|\ge 2$, we conjecture the upper bound
 $$
 \max\{\log |n|,|a|,\log|x|,\log|y|\}\le \Newcst{kappa:conjecture}(1+\log m),
 $$ 
though for $m>1$ we cannot give an upper bound for $|n|$. Since the rank of the units of $\Q(\lambda_0)$ is $2$, one may expect a more general result as follows:

\begin{conjecture}
For $s,t$ and $n$ in $\Z$, define
$$
F_{n,s,t}(X,Y)= (X-\lambda_0^{s}\lambda_1^{t}Y)
(X-\lambda_1^{s}\lambda_2^{t}Y)
(X-\lambda_2^{s}\lambda_0^{t}Y).
$$
There exists a positive absolute constant $\Newcst{kappa:conjectureRang2}$ with the following property: 
If $n,s,t,x,y,m$ are integers satisfying 
$$
\max\{|x|, |y|\}\ge 2, \quad (s,t)\not=(0,0)
\quad\hbox{and}\quad 0<|F_{n,s,t}(x,y)|\le m,
$$
then 
$$
\max\{\log |n|, |s|, |t|, \log |x|, \log |y|\}\le \cst{kappa:conjectureRang2} (1+\log m).
$$
\end{conjecture}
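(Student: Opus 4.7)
The plan is to extend the approach yielding Theorem \ref{Theoreme:principal} by replacing the cyclic family of units $\lambda_0^a,\lambda_1^a,\lambda_2^a$ by the Galois orbit of the rank-$2$ unit $\eta_0 := \lambda_0^{s}\lambda_1^{t}$ under the cyclic action $\lambda_0\mapsto\lambda_1\mapsto\lambda_2\mapsto\lambda_0$, namely $\eta_0$, $\eta_1:=\lambda_1^{s}\lambda_2^{t}$, $\eta_2:=\lambda_2^{s}\lambda_0^{t}$. Setting $\beta_i := x-\eta_i y$, one has $\beta_0\beta_1\beta_2 = F_{n,s,t}(x,y)$ with $0<|\beta_0\beta_1\beta_2|\le m$; after renaming, one $|\beta_i|$, say $|\beta_0|$, is much smaller than the other two, which have magnitudes comparable to $\max\{|x|,|y|\}$ times the relevant archimedean absolute value of $\eta_j$. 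The Siegel identity
$$
(\eta_1-\eta_2)\beta_0 + (\eta_2-\eta_0)\beta_1 + (\eta_0-\eta_1)\beta_2 = 0
$$
then produces a quotient of algebraic numbers extremely close to $1$, hence a small linear form in logarithms whose effective unknowns are $s$, $t$ and $\log|\beta_1/\beta_2|$.

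Next, I would apply a sharp Baker-type lower bound (Matveev, or Laurent--Mignotte--Nesterenko in the three-logarithm case) to this linear form, using that the heights of $\lambda_0,\lambda_1,\lambda_2$ are all of order $\log(|n|+2)$ and that the regulator of $\Q(\lambda_0)$ grows like $(\log|n|)^2$. Comparing the Baker lower bound with the upper bound coming from the smallness of $|\beta_0|$ should yield an inequality of the rough shape
$$
\max(|s|,|t|) \le C_1 (1+\log m)(\log(|n|+2))^{\alpha}(\log\log(|n|+3))^{\beta}
$$
for some absolute $\alpha,\beta$, in the spirit of parts (ii) and (iii) of Theorem \ref{Theoreme:principal}. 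A parallel archimedean gap argument on $|\beta_1|,|\beta_2|$ should then control $\log\max\{|x|,|y|\}$ in terms of $\log m$, $|s|$, $|t|$ and $\log|n|$. Combined, these estimates would give the qualitative finiteness statement $\max\{|s|,|t|,\log|x|,\log|y|\}\le C_2(1+\log m)$ for each fixed $|n|$.

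The genuine obstacle is bounding $\log|n|$ itself by a constant multiple of $1+\log m$. The rank-$1$ theorem already shows that, with $a=\pm 1$, no such bound on $|n|$ is available from current Baker techniques, and the authors explicitly warn that for $m>1$ one cannot prove an upper bound for $|n|$ in that case. The hope in the rank-$2$ setting is that one genuinely has two independent degrees of freedom in $\ZKtimes$, and thus two independent Baker-type estimates (one for each generator of the unit group modulo torsion); eliminating $s$ and $t$ between these two estimates, perhaps reinforced by a $p$-adic lower bound at a prime dividing the index $[\ZK:\Z[\lambda_0]]$, would in principle isolate $|n|$. Making this elimination unconditional, with no residual dependence on $s,t,x,y$, appears to lie beyond present techniques on linear forms in logarithms, which is presumably why the statement is offered as a conjecture rather than a theorem.
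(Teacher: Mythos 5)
This statement is labeled as a \emph{conjecture} in the paper and is accompanied by no proof; the authors explicitly note just before it that even in the rank-$1$ twisted setting they cannot bound $|n|$ for $m>1$. You correctly recognized this: your outline is a plausible heuristic sketch of how one might attack the problem (Siegel identity for the Galois orbit of $\lambda_0^s\lambda_1^t$, Baker-type lower bounds with heights of order $\log|n|$ and regulator of order $(\log|n|)^2$), and, crucially, you correctly identify the genuine obstruction — eliminating $s,t$ from two Baker estimates to extract an unconditional bound on $|n|$ in terms of $1+\log m$ alone — and correctly conclude that this lies beyond current techniques, which is precisely why the authors present the statement as a conjecture rather than a theorem. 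There is nothing to compare your argument against, since the paper offers none; your reading of the situation is accurate.
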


\smallskip
The parts (ii) and (iii) of Theorem $\ref{Theoreme:principal}$ involve a family of Thue inequations $|F_{n,a}(x,y)|\le m$ with $m\ge 1$, but it suffices to consider for each
\linebreak
$m_0\in \{1,2, \dots, m\}$ the family of Thue equations 
$$
|F_{n,a}(x,y)|=m_0,
$$
where the unknowns $(n,a,x,y)$ take their values in $\Z^4$. 
 The constants $\kappa$ which will appear (often implicitly, in the $\BigOh$'s) 
 are positive numbers (absolute constants), the existence of which is postulated. 

\smallskip

 To prove parts (i) and (ii) of 
 Theorem $\ref{Theoreme:principal}$, 
 there is no restriction in assuming $n\ge 0$, thanks to the equality
$$
F_{-n-1}(X,Y)=F_n(-Y,-X)
$$
which implies
$$
F_{-n-1,a}(X,Y)=F_{n,a}(-Y,-X).
$$
In the same vein, the relation 
$$
F_{n,-|a|}(X,Y)=-F_{n,|a|}(Y,X)
$$
 allows to suppose $a\ge 1$. As a consequence of these two relations, in the part (iii), we may replace the assumptions 
 $$
 \hbox{$n\ge 0$, $a\ge 1$ and 
$|y|\ge 2\root 3\of m$}
$$
by
 $$ 
\min\{|x|,\; |y|\}\ge 2\root 3\of m,
$$
provided that we replace $n$ and $a$ by $|n|$ and $|a|$ respectively in the conclusion. 

Moreover, because of the relation 
$$
F_{n,a}(X,Y)=-F_{n,a}(-X,-Y),
$$
to prove each part of Theorem $\ref{Theoreme:principal}$, we may assume $y\ge 0$. 
Since the parts (i) and (ii) of Theorem $\ref{Theoreme:principal}$ are not new for $a=1$, we may assume $a\ge 2$ (which is no restriction for the parts (iii) and (iv)). 

\smallskip
It happens that this paper is the fourth one in which we use the effective method arising from Baker's work on linear forms of logarithms for families of Thue equations obtained via a twist of a given equation by units. The first paper \cite{LW-JAustralMS} was dealing with non totally real cubic fields;
 the second one \cite{LW-Balu} was dealing with Thue equations attached to a number field having at most one real embedding. In the third paper \cite{LW}, for each (irreducible) binary form attached to an algebraic number field, which is not a totally real cubic field, we exhibited an infinite family of equations twisted by units for which Baker's method provides effective bounds for the solutions. In this paper, we deal with equations related to infinite families of cyclic cubic fields. In a forthcoming paper \cite{LWrang1}, we go one step further by considering twists by a power of a totally real unit. 
 
 \smallskip
 
\vskip 0.5cm
\section{Related results} 

Let us state other results involving the families of Thomas. 
Theorem 3 of M.~Mignotte, A.~Peth\H{o} and F.~Lemmermeyer \cite{MPL}, 
published in 1996, gives a complete list of the triples $(x,y,n)\in\Z^3$ with $n\geq 0$ such that 
$$
0<|F_n(x,y)|\le 2n+1.
 $$
In particular, when $m$ is a given positive integer, there exists an integer $n_0$ depending upon $m$ such that the inequality $|F_n(x,y)|\leq m$ with $n\geq 0$ and $|y|>\root 3 \of m$ implies $n\leq n_0$. 
Note that for $0<|t|\leq \root 3 \of m$, $(-t,t)$ and $(t,-t)$ are solutions. Therefore, the condition $|y|>\root 3 \of m$ cannot be omitted. 

\smallskip
 Also in 1996, for the family of Thue inequations 
 $$
 0<|F_n(x,y)|\leq m,
 $$ 
 J.H.~Chen \cite{C} has given a bound for $n$ by using Pad\'e's approximations. This bound was highly improved in 1999 by G.~Lettl, A.~Peth\H{o} and P.~Voutier \cite{LPV}. More recently, I.~Wakabayashi \cite{W}, using again the approximants of Pad\'e, extended these results to the families of forms, depending upon two parameters, 
$$
sX^3 -tX^2Y -(t+3s)XY^2 -sY^3, 
$$ 
which includes the family of Thomas for $s=1$ (with $t=n-1$). Wakabayashi considered also the family of quartic forms
 $$
 sX^4-tX^3Y-6sX^2Y^2+tXY^3+sY^4
 $$
and the family of sextic forms
 $$ 
 sX^6 -2tX^5Y -(5t+15s)X^4Y^2
-20sX^3Y^3 +5tX^2Y^4 +(2t+6s)XY^5 +sY^6.
 $$

When the following polynomials are irreducible for $s,t\in\Z$, 
the fields $\Q(\omega)$ generated by a root $\omega$ of respectively 
$$\left\{
\begin{array}{l}
sX^3-tX^2-(t+3s)X-s,
\\[2mm]
sX^4-tX^3-6sX^2+tX+s,
\\[2mm]
sX^6 -2tX^5 -(5t+15s)X^4-20sX^3 +5tX^2 +(2t+6s)X +s,
\end{array} \right.
$$
are cyclic over $\Q$ of degree 3, 4 and 6 respectively and are called {\it simplest fields} by many authors. In each of the three cases, the roots of the polynomials can be described via homographies of $PSL_2(\Z)$ of degree 3, 4 and 6 respectively. See \cite{LPV,W}. 
 \vskip 0.5cm
 
\section{A side remark}

The parts (ii) and (iii) of Theorem $\ref{Theoreme:principal}$ 
 may be viewed as writing lower bounds for $|F_{n,a}(x,y)|$. In some opposite direction, we can exhibit quadruples 
\linebreak
$(n,a,x,y)\in\Z^4$ such that $|F_{n,a}(x,y)|$ takes relatively small values.
 
\begin{lemme}\label{Lemme:Fnapetit}
Let $n\ge 0$ and $a\ge 1$ be integers. There exist infinitely many $(x,y)\in\Z^2$ with $y>0$ 
such that 
$$
|F_{n,a}(x,y)|\le y (n+4)^a. 
$$
\end{lemme}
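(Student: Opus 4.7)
The plan is to apply Dirichlet's approximation theorem not to the large root $\lambda_0^a$ but to the small root $\lambda_1^a$. The reason is that approximating $\lambda_0^a$ by $x/y$ introduces the residual product $|\lambda_0^a-\lambda_1^a|\cdot|\lambda_0^a-\lambda_2^a|\approx \lambda_0^{2a}\approx n^{2a}$, which is much too large; centering around $\lambda_1^a$ instead gives $|\lambda_0^a-\lambda_1^a|\cdot|\lambda_1^a-\lambda_2^a|$, controlled essentially by $(\lambda_0|\lambda_2|)^a=(\lambda_0+1)^a$, which matches the size of the target $(n+4)^a$.

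First I would use that $\lambda_1^a$ generates the cubic field $\Q(\lambda_0)$ over $\Q$, so it is irrational, and Dirichlet's theorem supplies infinitely many $(x,y)\in\Z\times\Z_{>0}$ with $|x-\lambda_1^a y|\le 1/y$. For such a pair, the triangle inequality yields $|x-\lambda_i^a y|\le 1/y+|\lambda_1^a-\lambda_i^a|\,y$ for $i=0,2$, so that
$$
|F_{n,a}(x,y)|\le \frac{1}{y}\Bigl(\frac{1}{y}+|\lambda_0^a-\lambda_1^a|\,y\Bigr)\Bigl(\frac{1}{y}+|\lambda_1^a-\lambda_2^a|\,y\Bigr).
$$
The dominant term as $y\to\infty$ is $y\cdot|\lambda_0^a-\lambda_1^a|\cdot|\lambda_1^a-\lambda_2^a|$, the remaining contributions decaying like $1/y$ and $1/y^3$.

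The central estimate will come from the explicit formulas $\lambda_1=-1/(\lambda_0+1)$ and $\lambda_2=-(\lambda_0+1)/\lambda_0$, giving $\lambda_0|\lambda_2|=\lambda_0+1$, $|\lambda_1|\cdot|\lambda_2|=1/\lambda_0$, and $\lambda_0|\lambda_1|=\lambda_0/(\lambda_0+1)<1$. Expanding $|\lambda_0^a-\lambda_1^a|\cdot|\lambda_1^a-\lambda_2^a|$ according to the parity of $a$, and noting that the cross term $(\lambda_0|\lambda_1|)^a$ enters with a negative sign in both cases, one arrives at
$$
|\lambda_0^a-\lambda_1^a|\cdot|\lambda_1^a-\lambda_2^a|\le(\lambda_0+1)^a+\lambda_0^{-a}\le(n+3)^a+1\le(n+4)^a,
$$
using the uniform bound $\lambda_0<n+2$ (which follows from $f_n(n+2)=2(n+2)^2-1>0$) and the elementary inequality $(n+4)^a-(n+3)^a\ge 1$ for all $a\ge 1$ and $n\ge 0$. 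The positive slack then absorbs the $O(1/y)+O(1/y^3)$ error terms once $y$ exceeds an explicit $(n,a)$-dependent threshold, and Dirichlet's theorem furnishes infinitely many pairs $(x,y)$ with $y$ above that threshold. The main delicate point will be the parity-dependent sign accounting: one has to verify that the cross term $(\lambda_0|\lambda_1|)^a$ is always subtracted, which is what keeps the final bound comfortably below $(n+4)^a$ rather than spoiling it by an extra multiplicative factor.
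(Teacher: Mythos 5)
Your proof is correct and takes essentially the same approach as the paper: Dirichlet approximation to the $a$-th power of one of the two small roots, followed by the triangle inequality and a bound on the residual product of root differences. The only differences are cosmetic — the paper approximates $\lambda_2^a$ rather than $\lambda_1^a$ (either choice avoids $\lambda_0^a$ and yields a product of size $(\lambda_0|\lambda_2|)^a=(\lambda_0+1)^a$), and the paper merely asserts the bound $|\lambda_2^a-\lambda_1^a|\cdot|\lambda_0^a-\lambda_2^a|<2(n+4)^a$ whereas you spell out the parity-dependent expansion that verifies it.
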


The proof of Lemma $\ref{Lemme:Fnapetit}$ will use the following bounds for 
 the three roots of $f_n(X) = F_{n,1} (X,1)$ for $n\geq 2$; the bounds are given in  \cite{LPV} and are easily validated with  {\sl MAPLE}:

 the bounds are given in  \cite{LPV}
\begin{equation}\label{equation:boundslambda}
\left\{
\begin{array}{rccllclll}\displaystyle
n+\displaystyle\frac{1}{n} &\!\!\!< \!\!\!&n+\displaystyle\frac{2}{n+1} &\!\!\!< \!\!\!& \lambda_0&\!\!\! < \!\!\!& \displaystyle n+\frac{2}{n},
\\[5mm]
\displaystyle 
-\frac{1}{n+1}&\!\!\!<\!\!\!& -\displaystyle\frac{1}{n+1+\frac{1}{n}} &\!\!\!<\!\!\!& \lambda_1 &\!\!\!< \!\!\!&\displaystyle -\frac{1}{n+1+\frac{2}{n}} &\!\!\!\le\!\!\!& -\displaystyle\frac{1}{n+2}, 
\\[5mm]\displaystyle
-1-\displaystyle\frac{1}{n}&\!\!\!<\!\!\!& -1-\displaystyle\frac{n}{n^2+1} &\!\!\!<\!\!\!& \lambda_2 &\!\!\!< \!\!\!&\displaystyle -1 -\frac{n}{n^2+2} 
&\!\!\!\le\!\!\!& -1-\displaystyle\frac{1}{n+1}\cdot
\\[2mm]
\end{array}\right.
\end{equation}
For $n=1, \; f_1(X)=X^3-3X-1=(X-\lambda_0)(X-\lambda_1)(X-\lambda_2) $ and we have
$$
1.8793<\lambda_0<1.8794, \quad -0.3473<\lambda_1<-0.3472, \quad -1.5321<\lambda_2<-1.532.
$$

\begin{proof}[\indent Proof of Lemma $\ref{Lemme:Fnapetit}$.] 
Let $\epsilon$ be a positive real number. We will prove a refined version of Lemma $\ref{Lemme:Fnapetit}$ according to which 
there exist infinitely many $(x,y)\in\Z^2$ with $y>0$ such that 
$$ 
|F_{n,a}(x,y)| 
\le \frac{1}{2}y (1+\epsilon)
|\lambda_2^a-\lambda_1^a|\cdotp
|\lambda_0^a-\lambda_2^a|
\cdotp 
$$ 
 Lemma $\ref{Lemme:Fnapetit}$ will follow from the upper bound
$$ 
|\lambda_2^a-\lambda_1^a|\cdotp
|\lambda_0^a-\lambda_2^a|
< 2 (n+4)^a
$$
by selecting $\epsilon$ sufficiently small so that
$$ 
(1+\epsilon)|\lambda_2^a-\lambda_1^a|\cdotp
|\lambda_0^a-\lambda_2^a|
 \le 2 (n+4)^a. 
$$ 
We may assume 
$\epsilon<3$ so that 
$$
\left(1+\frac{\epsilon}{3}\right)^2<1+\epsilon.
$$
Let $Y$ be a sufficiently large integer so that
$$
\frac{1}{2Y^2}\le \frac{\epsilon}{3} \min\left\{
|\lambda_2^a-\lambda_1^a|\; , \; 
|\lambda_0^a-\lambda_2^a|\right\}.
$$
Thanks to the theory of Diophantine rational approximation of real numbers (continued fractions, Farey sequences or geometry of numbers), we know that there exist infinitely many couples $(x,y)\in\Z^2$ with 
$$
y\ge Y\quad \mbox{and} \quad y|x-\lambda_2^a y|\leq \frac{1}{2}\cdotp
$$
 For these couples $(x,y)$ of integers, we have 
$$\left\{ 
\begin{array}{l}
|x-\lambda_0^a y|\le 
|x-\lambda_2^a y|+ |\lambda_0^a-\lambda_2^a| y
\le
\displaystyle
\frac{1}{2y}+ |\lambda_0^a-\lambda_2^a| y
\le
y\left( 1+\frac{\epsilon}{3}\right) |\lambda_0^a-\lambda_2^a|, 
 \\[5mm]
|x-\lambda_1^a y|\le
|x-\lambda_2^a y|+ |\lambda_2^a-\lambda_1^a| y
\le
\displaystyle
\frac{1}{2y}+ |\lambda_2^a-\lambda_1^a| y
\le
y\left( 1+\frac{\epsilon}{3}\right) |\lambda_2^a-\lambda_1^a|, 
\end{array}\right.
$$
whereupon we have 
$$ 
|F_{n,a}(x,y)| =
|x-\lambda_0^a y|\cdot|x-\lambda_1^a y|\cdot |x-\lambda_2^a y|
\le 
 \frac{1}{2}y (1+\epsilon) 
|\lambda_2^a-\lambda_1^a|\cdotp
|\lambda_0^a-\lambda_2^a|
\cdotp 
$$
 \end{proof}

\section{The notation $\boldmath\BigOh$}\label{section:bigoh}

When $U$, $V$, $W$ are real numbers depending upon the data of the problem that we deal with, namely $n$, $a$, $x$, $y$ and $m$, the notations 
$$
U= V + \BigOh(W)
\quad\hbox{or}\quad 
U- V = \BigOh(W)
$$
mean that there exists an absolute positive constant $\kappa$
such that 
$$
|U-V|\le \kappa |W|
$$
for $\max\{|n|,|a|\}$ sufficiently large. These absolute constants $\kappa$, together with the absolute constant defined by the minimal value of 
$\max\{|n|,|a|\}$ from which all these inequalities hold true, are effectively computable. 
\smallskip

We also write 
$$
U= V\bigl(1 + \BigOh(W)\bigr)
$$
for 
$$
U= V + \BigOh(V W). 
$$

Very often the following lemma will be used without being explicitly quoted.

\smallskip

\begin{lemme}\label{Lemme:BigOh}
Let $U$, $U_1$, $U_2$, $V$, $V_1$, $V_2$, $W$, $W_1$, $W_2$ be real functions of $n$ and $a$. Suppose that $V$, $V_1$, $V_2$ do not vanish and that, when $\max\{|n|,|a|\}$ goes to infinity, the limits of $W$, $W_1$, $W_2$ 
all are $0$. 

{\rm (i)} Suppose 
$$
U= V\bigl(1 + \BigOh(W_1)\bigr)
$$
and $W_1\le W_2$. 
Then
$$
U= V\bigl(1 + \BigOh(W_2)\bigr).
$$

{\rm (ii)}
 Suppose 
$$
U= V\bigl(1 + \BigOh(W)\bigr).
$$
Then for $\max\{|n|,|a|\}$ sufficiently large, $U\not = 0$ and one has 
\begin{equation}\label{Equation:V=U+bigoh}
V= U\bigl(1 + \BigOh(W)\bigr),
\end{equation}\vskip -0.8cm
\begin{equation}\label{Equation:-U=-V+bigoh}
-U= -V\bigl(1 + \BigOh(W)\bigr),
\end{equation}\vskip -0.8cm
\begin{equation}\label{Equation:1/U=1/V+bigoh}
\frac{1}{U}= \frac{1}{V}\bigl(1 + \BigOh(W)\bigr),
\end{equation}\vskip -0.8cm
\begin{equation}\label{Equation:logU=logV+bigoh}
\log |U| = \log |V| +\BigOh(W). 
\end{equation} 

{\rm (iii)} Suppose 
$$
U_1= V_1\bigl(1 + \BigOh(W_1)\bigr)
\quad\hbox{and}\quad
U_2= V_2\bigl(1 + \BigOh(W_2)\bigr).
$$
Let $W=\max\{|W_1|, |W_2|\}$. Then
$$
U_1U_2= V_1V_2\bigl(1 + \BigOh(W)\bigr).
$$
If in addition, $V_1$ and $V_2$ have the same sign, then
$$
U_1+U_2= (V_1+V_2)\bigl(1 + \BigOh(W)\bigr).
$$ 

 {\rm (iv)} Suppose 
$$
U= V_1\bigl(1 + \BigOh(W)\bigr)
\quad\hbox{and}\quad
V_1= V_2\bigl(1 + \BigOh(W)\bigr).
$$
Then
$$
U= V_2\bigl(1 + \BigOh(W)\bigr).
$$
 \end{lemme}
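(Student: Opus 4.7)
The plan is to unwind the definition of $\BigOh$ from Section~\ref{section:bigoh} in each case and to verify the stated identities by elementary algebraic manipulations, using the fact that the various error functions tend to $0$ to justify inversions and logarithms. Every assertion reduces to bounding a suitable expression by a constant multiple of $|VW|$ (or of $|W|$), valid once $\max\{|n|,|a|\}$ exceeds a computable threshold built from the hypothesised ones.

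Part (i) is immediate from the definition, since $|U-V|\le \kappa|V||W_1|\le \kappa|V||W_2|$ with the same $\kappa$. For part (ii), I would first note that $|U/V-1|\le \kappa|W|$ for some $\kappa$; since $W\to 0$, once $\max\{|n|,|a|\}$ is large enough so that $\kappa|W|\le 1/2$ we get $U\neq 0$ and $|V/U|\le 2$, whence $|V/U-1|=|V/U|\cdot|1-U/V|\le 2\kappa|W|$, which yields \eqref{Equation:V=U+bigoh}. The identities \eqref{Equation:-U=-V+bigoh} and \eqref{Equation:1/U=1/V+bigoh} follow respectively by multiplying the original relation by $-1$ and by $1/(UV)$. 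For \eqref{Equation:logU=logV+bigoh} I would invoke the elementary estimate $|\log(1+z)|\le 2|z|$ valid for $|z|\le 1/2$, applied to $z=U/V-1$ after writing $\log|U|-\log|V|=\log|U/V|$.

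For part (iii), the product identity follows from
$$
\frac{U_1U_2}{V_1V_2}=\bigl(1+\BigOh(W_1)\bigr)\bigl(1+\BigOh(W_2)\bigr)=1+\BigOh(W_1)+\BigOh(W_2)+\BigOh(W_1W_2),
$$
together with the observation that $|W_1W_2|\le W\cdot\max\{|W_1|,|W_2|\}\le W$ for $\max\{|n|,|a|\}$ sufficiently large, since both $W_i\to 0$. For the sum, writing $(U_1+U_2)-(V_1+V_2)=(U_1-V_1)+(U_2-V_2)$, I would bound the right-hand side by $\kappa_1|V_1||W_1|+\kappa_2|V_2||W_2|\le(\kappa_1+\kappa_2)W(|V_1|+|V_2|)$ and invoke the sign hypothesis to replace $|V_1|+|V_2|$ by $|V_1+V_2|$. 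Finally, part (iv) is an immediate corollary of the product rule of (iii) applied with $U_1=U$, $V_1=V_1$, $U_2=V_1$, $V_2=V_2$, since that rule gives $\BigOh(W)\cdot\BigOh(W)=\BigOh(W)$ when $W\to 0$.

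There is no substantial obstacle here; the only mildly delicate point, and therefore the main bookkeeping issue, is tracking the threshold on $\max\{|n|,|a|\}$ past which the various denominators and logarithms are well defined and the absolute constants combine coherently. Since only finitely many constants enter and all the error functions tend to $0$, one simply takes the maximum of the relevant thresholds and absorbs the resulting inflation into an enlarged absolute constant.
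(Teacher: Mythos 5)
Your proposal is correct and follows essentially the same route as the paper: unwind the $\BigOh$ definition in each part, control $U/V-1$ once $\kappa|W|\le 1/2$, and use the elementary bounds on $1/(1\pm t)$ and $\log(1\pm t)$ for $0\le t\le 1/2$. The one small departure is in part (iv), where you derive the transitivity from the product rule of (iii) applied to $U\cdot V_1=(U)(V_1)$ and then divide by $V_1$, whereas the paper argues directly by the triangle inequality $|U-V_2|\le|U-V_1|+|V_1-V_2|$; both are equally elementary and valid.
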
 

\begin{proof}[\indent Proof]
Since only the absolute values of $W$, $W_1$, $W_2$ come into play, we will assume that these values are $\ge 0$. 
The part (i) follows from the definition of $\BigOh$. 

\smallskip

The hypothesis in (ii) implies the existence of an absolute constant $\kappa>0$ such that for $\max\{|n|,|a|\}$ sufficiently large we have 
\begin{equation}\label{Equation:Hypothese(b)}
V-\kappa |V| \, W\leq 
U \le V+\kappa |V| \, W.
\end{equation}
The number $t=\kappa W$ verifies $0\le t<1$, whereupon, for $\max\{|n|,|a|\}$ sufficiently large, $U\not=0$, $U$ and $V$ are of the same sign and 
\begin{equation}\label{Equation:UsurV}
1-\kappa W\le 
\frac{U}{V}\le 1+\kappa W.
\end{equation}
For $0\leq t\leq 1/2$, one has 
$$
1-t \leq \frac{1}{1+t} \leq 1
\quad\hbox{and}\quad
1 \leq \frac{1}{1-t} \leq 1+2t.
$$
For $\max\{|n|,|a|\}$ sufficiently large, the number $t=\kappa W$ verifies $0\leq t\leq 1/2$; hence
$$1-\kappa W
\leq
\frac{1}{1+\kappa W}\le 
\frac{V}{U}\le 
\frac{1}{1-\kappa W}\le 1+2\kappa W
$$
and
$$
U-2\kappa |U| \, W \le V\le U+2\kappa |U| \, W,
$$ 
which secures the equality $(\ref{Equation:V=U+bigoh})$.
The proof of $(\ref{Equation:-U=-V+bigoh})$ is direct.

\smallskip

Upon division of each member of $(\ref{Equation:Hypothese(b)})$ by $UV$ (which is positive), we obtain 
$$
\frac{1}{U}-\kappa \frac{W}{|U|} \le 
\frac{1}{V} \le 
\frac{1}{U}+\kappa \frac{W}{|U|}, 
$$ 
which leads to $(\ref{Equation:1/U=1/V+bigoh})$. 

\smallskip
For $0\le t\le 1/2$, we have 
$$
-2t\le \log (1-t)
\quad\hbox{and}\quad
\log(1+t)\le 2t.
$$
Thanks to $(\ref{Equation:UsurV})$,
these inequalities with $t=\kappa W$ imply $(\ref{Equation:logU=logV+bigoh})$. This takes care of the proof of (ii). 

\smallskip

Using (i), we deduce from the hypothesis in (iii) that there exists a constant $\kappa$ such that
\begin{equation}\label{Equation:U1V1U2V2}
V_1-\kappa |V_1| \, W\le 
U_1 \le V_1+\kappa |V_1| \, W
\quad\hbox{and}\quad
V_2-\kappa |V_2| \, W\le 
U_2 \le V_2+\kappa |V_2| \, W.
\end{equation}
To estimate the value of the product, by using $(\ref{Equation:-U=-V+bigoh})$, we are reduced to the case where $U_1$ and $U_2$ are both positive (and so will be $V_1$ and $V_2$). 
Multiplying the members of these inequalities, we obtain
$$
(V_1-\kappa V_1W)(V_2-\kappa V_2W)\le 
U_1 U_2 \le 
(V_1+\kappa V_1W)( V_2+\kappa V_2W).
$$
It only remains to use the upper bound $3\kappa V_1V_2W $ for 
$2 \kappa V_1V_2W + \kappa^2V_1V_2W^2$ to conclude since $W$ goes to $0$. 

\smallskip
For the sum, supposing that $V_1$ and $V_2$ have the same sign (which is also the sign of $U_1$ and $U_2$), we add the corresponding members of the inequalities $(\ref{Equation:U1V1U2V2})$: 
$$
V_1+V_2-\kappa( |V_1|+|V_2|)W \le 
U_1 +U_2 \le V_1+V_2+\kappa (|V_1|+|V_2|)W;
$$
 note that $| V_1+V_2|=|V_1|+|V_2|$. So the part (iii) is secured. 

\smallskip
Notice that 
the hypothesis that $V_1$ and $V_2$ have the same signs may not be removed: consider for instance $V_1=-V_2$, $U_1=V_1$, $U_2=V_2+W$. 

\smallskip
Under the hypotheses of (iv), there exists an absolute constant $\kappa>0$ such that, for $\max\{|n|,|a|\}$ sufficiently large, we have 
$$
|U-V_1| \le \kappa |V_1|\, W
\quad\hbox{and}\quad
 |V_1-V_2| \le \kappa |V_2|\, W.
$$
Hence 
\begin{align}
\notag
|U-V_2| &\le \; |U-V_1|+|V_1-V_2|
\le \kappa (|V_1|+ |V_2|) W
\\[2mm]
\notag
&
\le \kappa |V_2|\, W + \kappa (|V_2|+\kappa |V_2|\, W)W
\le 3\kappa |V_2|\, W,
\end{align}
since $\kappa W\le 1$ for $\max\{|n|,|a|\}$ sufficiently large. 
This completes the proof of Lemma $\ref{Lemme:BigOh}$.
\end{proof}

 \section{Estimates for $\boldmath|x-\lambda_i^a y|$}
 
 Suppose that 
$n$, $a$, $m$, $x$, $y$ are rational integers with $n\ge 0$, $a\ge 2$, $m\ge 1$ and $y\ge 0$ 
such that 
$$
|F_{n,a}(x,y)|=m.
$$ 
When $n$ and $a$ are bounded by absolute constants, the conclusion of the parts (iii) and (iv) of Theorem $\ref{Theoreme:principal}$ is immediate, while the upper bounds for $\max\{|x|,|y|\}$ given in the parts (i) and (ii) of Theorem $\ref{Theoreme:principal}$ come from the theorem of Thue under an effective formulation (given by N.I.~Feldman and A.~Baker) that one can find for instance in Theorem 5.1 of \cite{ST}.
Therefore, let us suppose $\max\{n,a\}$ sufficiently large, so we are free to use the notation $\BigOh$ of Section $\ref{section:bigoh}$. Our paper \cite{LWrang1} takes care of the case where $n$ is bounded by an absolute constant. This allows us to suppose $n\ge 1$. 

\smallskip

Thanks to $(\ref{equation:boundslambda})$ and to our assumption $n\ge 1$, we have $|\lambda_2|<\lambda_0$.
 We deduce 
$$\left\{
\begin{array}{lll} 
\lambda_0^a-\lambda_1^a &=&\lambda_0^a\left(1+\BigOh\left(\displaystyle \frac{1}{\lambda_0^{2a}}\right)\right),
\\[5mm]
\lambda_0^a-\lambda_2^a
&=&\lambda_0^a\left(
1+\BigOh\left(\displaystyle
\frac{\lambda_2^a}{\lambda_0^a}\right)\right),
\\[5mm]
\lambda_2^a-\lambda_1^a
&=&\lambda_2^a\left(
1+\BigOh\left(\displaystyle
\frac{1}{\lambda_0^a}
\right) \right), 
\end{array} \right.
$$
 from which we obtain
$$\left\{
\begin{array}{lll} 
\displaystyle \frac{\lambda_{1}^a-\lambda_{2}^a}
{ \lambda_{1}^a-\lambda_{0}^a}
&=&
\displaystyle \frac{ (-1)^a (\lambda_0+1)^a} { \lambda_0^{2a}} 
 \left( 1+ \BigOh
 \left(
\displaystyle \frac{1}{\lambda_0^a}
\right)
\right),
 \\[5mm]\displaystyle
\frac{ \lambda_{2}^a-\lambda_{0}^a}
{\lambda_{2}^a-\lambda_{1}^a}
&=&\displaystyle
\frac{ (-1)^{a+1} \lambda_0^{2a}}{ (\lambda_0+1)^a}
\left( 1+ \BigOh
\left(\displaystyle
\frac{\lambda_2^a}{\lambda_0^a}
\right)
\right),
 \\[5mm]\displaystyle
\frac{ \lambda_{0}^a-\lambda_{1}^a}
{\lambda_{0}^a-\lambda_{2}^a}
&=& 1+ \BigOh
\left(\displaystyle
\frac{\lambda_2^a}{\lambda_0^a}\right).
\end{array} \right.
 $$

Writing
$$
\gamma_{0,a}=x-\lambda_0^a y, \quad
\gamma_{1,a}=x-\lambda_1^a y, \quad
\gamma_{2,a}=x-\lambda_2^a y,
$$
 we have 
\begin{equation}\label{Equation:produitgamma}
|\gamma_{0,a}\gamma_{1,a}\gamma_{2,a}|=m.
\end{equation}
For at least one index $ i \in \{0, 1, 2\}$, we have 
$$
|\gamma_{i,a}|\le \root 3 \of m
$$
and such an index will be denoted by $i_0$. The two other indices will be denoted by $i_1,\, i_2$, 
under the assumption that $(i_0,i_1,i_2)$ be a circular permutation of $(0,1,2)$. 

\smallskip
Let us first consider an easy case of the part (ii) of Theorem $\ref{Theoreme:principal}$, namely when we are under 
the hypothesis that $y\le 2\root 3 \of m$. It turns out that we have 
$$
|x|\le |\gamma_{i_0,a}|+ y |\lambda_{i_0}^a| \le \root 3 \of m + y \lambda_0^a,
$$
hence 
$$
\log 
\max\{|x|,y,2\}\le \frac{1}{3} \log m + (a+1) \log (n+2),
$$
which is much stronger than required to secure the conclusion of the part (ii) of Theorem $\ref{Theoreme:principal}$ . 

\smallskip
When $y=0$ or $y=1$, the first case of Theorem $\ref{Theoreme:principal}$ is immediate. As a matter of fact, the hypothesis 
$\max\{|x|, |y|\}\ge 2$ forces $|x|\ge 2$ and each of the two cases $y=0$ and $y=1$ are dealt with in some elementary way.

\smallskip

From now on and up to the end of section $\ref{Section:DemonstrationTheoremeagrand}$, we will assume that we are under the hypothesis 
\begin{equation}\label{equation:yge2root3ofm}
y\ge 2\root 3\of m.
\end{equation}

\smallskip

\begin{lemme}\label{Lemme:Estimations}$\,$
{\rm (i)} Suppose $i_0=0$. We have 
 $$
\gamma_{1,a}= y \lambda_0^a\left( 1+ \BigOh\left(
\frac{1}{\lambda_0^{2a}}
\right)
\right), \qquad
\gamma_{2,a} = y \lambda_0^a \left( 1+ \BigOh
\left(
\frac{\lambda_2^a}{\lambda_0^a}
\right)
\right), 
$$
$$
|\gamma_{0,a}|= \frac{m}{y^2 \lambda_0^{2a}} \left( 1+ \BigOh
\left(
\frac{\lambda_2^a}{\lambda_0^a}
\right)
\right),
$$
from which we obtain 
$$
\left|
\frac{\gamma_{0,a}(\lambda_{1}^a-\lambda_{2}^a)}
{\gamma_{2,a}(\lambda_{1}^a-\lambda_{0}^a)}
\right| 
=
\frac{ m (\lambda_0+1)^a} {y^3 \lambda_0^{5a}} 
 \left( 1+ \BigOh
 \left(
\frac{\lambda_2^a}{\lambda_0^a}
\right)
\right).
 $$ 
 
{\rm (ii)} 
Suppose $i_0=1$. We have 
 $$
\gamma_{2,a}= -y \lambda_2^a \left( 1+ \BigOh
\left(
\frac{1}{\lambda_0^a}
\right)
\right),
\qquad
\gamma_{0,a}= -y \lambda_0^a\left( 1+ \BigOh
\left(
\frac{1}{\lambda_0^{2a}}
\right)
\right),
$$
$$
|\gamma_{1,a}|= \frac{ m}{y^2 (\lambda_0+1)^a} \left( 1+ \BigOh
\left(
\frac{1}{\lambda_0^a}
\right)
\right),
$$
from which we obtain 
$$
\left|
\frac{\gamma_{1,a}(\lambda_{2}^a-\lambda_{0}^a)}
{\gamma_{0,a}(\lambda_{2}^a-\lambda_{1}^a)}
\right| 
=
\frac{ m\lambda_0^a}{y^3 (\lambda_0+1)^{2a}}
\left( 1+ \BigOh
\left(
\frac{\lambda_2^a}{\lambda_0^a}
\right)
\right).
$$ 

{\rm (iii)}
Suppose $i_0=2$. We have 
$$
\gamma_{0,a}= -y \lambda_0^a \left( 1+ \BigOh
\left(
\frac{\lambda_2^a}{\lambda_0^a}
\right)
\right),
\qquad
\gamma_{1,a}= y \lambda_2^a \left( 1+ \BigOh
\left(
\frac{1}{\lambda_0^a}
\right)
\right),
$$
 $$
|\gamma_{2,a}|=\frac{ m}{y^2 (\lambda_0+1)^a} \left( 1+ \BigOh
\left(
\frac{\lambda_2^a}{\lambda_0^a}
\right)
\right),
$$
from which we obtain
$$
\left|
\frac{\gamma_{2,a}(\lambda_{0}^a-\lambda_{1}^a)}
{\gamma_{1,a}(\lambda_{0}^a-\lambda_{2}^a)}
\right| 
=
\frac{ m \lambda_0^a}{y^3 (\lambda_0+1)^{2a}} 
\left( 1+ \BigOh
\left(
\frac{\lambda_2^a}{\lambda_0^a}
\right)
\right).
$$
\end{lemme}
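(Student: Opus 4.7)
The plan is, in each of the three cases, to exploit the trivial identities $\gamma_{i,a}-\gamma_{j,a}=(\lambda_j^a-\lambda_i^a)y$ together with the preliminary estimates for the differences $\lambda_i^a-\lambda_j^a$ established in the paragraph above the lemma, and to close the argument via the product identity $(\ref{Equation:produitgamma})$. The only a priori input is the inequality
$$
|\gamma_{i_0,a}|\le\root 3\of m\le y/2,
$$
coming from the definition of $i_0$ together with the standing assumption $(\ref{equation:yge2root3ofm})$.

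Consider first case (i), where $i_0=0$. From $\gamma_{1,a}=\gamma_{0,a}+(\lambda_0^a-\lambda_1^a)y$, the established estimate $\lambda_0^a-\lambda_1^a=\lambda_0^a(1+\BigOh(1/\lambda_0^{2a}))$ and the trivial bound $|\gamma_{0,a}/(\lambda_0^a y)|\le 1/(2\lambda_0^a)$, one first obtains the crude estimate $\gamma_{1,a}=\lambda_0^a y(1+\BigOh(1/\lambda_0^a))$; an analogous manipulation yields $\gamma_{2,a}=\lambda_0^a y(1+\BigOh(\lambda_2^a/\lambda_0^a))$. Plugging both into $(\ref{Equation:produitgamma})$ and invoking Lemma $\ref{Lemme:BigOh}$ gives the announced formula for $|\gamma_{0,a}|$; combined with $y^3\ge 8m$, this yields in particular $|\gamma_{0,a}|=\BigOh(y/\lambda_0^{2a})$, which fed back into the expression for $\gamma_{1,a}$ upgrades its error to the announced $\BigOh(1/\lambda_0^{2a})$. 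The ratio formula then follows from the preliminary identity for $(\lambda_1^a-\lambda_2^a)/(\lambda_1^a-\lambda_0^a)$ and a further application of Lemma $\ref{Lemme:BigOh}$.

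Cases (ii) and (iii) follow the same bootstrap template, with roles redistributed. In case (ii) ($i_0=1$), the approximations $|\gamma_{0,a}|\approx\lambda_0^a y$ and $|\gamma_{2,a}|\approx|\lambda_2|^a y$ together with the algebraic identity $|\lambda_0\lambda_2|=\lambda_0+1$ explain the factor $(\lambda_0+1)^a$ in the formula for $|\gamma_{1,a}|$. The genuinely delicate point is case (iii) ($i_0=2$): since $|\lambda_2|$ is only marginally larger than $1$, the a priori bound $|\gamma_{2,a}|\le y/2$ does not by itself make $\gamma_{2,a}/(\lambda_2^a y)$ small, and the naive first-pass estimate for $\gamma_{1,a}$ loses the required $o(1)$ error. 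The cure is a two-step bootstrap: using $\gamma_{0,a}=-\lambda_0^a y(1+\BigOh(\lambda_2^a/\lambda_0^a))$ (obtained as in case (i)) together with the crude two-sided bound $(|\lambda_2|^a-\tfrac12)y\le|\gamma_{1,a}|\le 2|\lambda_2|^a y$, the product identity $(\ref{Equation:produitgamma})$ gives $|\gamma_{2,a}|=\BigOh(m/(\lambda_0^a y^2))$; combined once more with $y^3\ge 8m$ this improves to $|\gamma_{2,a}/(\lambda_2^a y)|=\BigOh(1/\lambda_0^a)$, which is exactly what is needed to derive $\gamma_{1,a}=\lambda_2^a y(1+\BigOh(1/\lambda_0^a))$. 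The formula for $|\gamma_{2,a}|$ and for the ratio then follow as before. The main obstacle throughout is precisely this bootstrap: the naive first estimate is not refined enough, and one must detour through the norm equation to harvest the additional factor of $\lambda_0^{-a}$ in the error term.
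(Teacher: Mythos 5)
Your proposal is correct and mirrors the paper's own argument: in every case one uses the identity $\gamma_{j,a}=\gamma_{i_0,a}+(\lambda_{i_0}^a-\lambda_j^a)y$ together with the a priori bound $|\gamma_{i_0,a}|\le\root 3\of m\le y/2$ to get crude first estimates, feeds these into the norm equation $(\ref{Equation:produitgamma})$ to obtain the formula for the small conjugate, and then loops back through the triangular identity to refine the error terms. The only cosmetic difference is presentational: you single out case (iii) as the "delicate" one, whereas the paper's proof performs the same two-pass bootstrap (replacing the initial bound $|\gamma_{i_0,a}|\le\root 3\of m$ by the improved bound of order $\root 3\of m/\lambda_0^a$ before re-estimating) in both cases (ii) and (iii), since the same difficulty — $|\lambda_2|^a$ being close to $1$ for large $n$ — is present there too.
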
 
 
\begin{proof}[\indent Proof] 
We will use the equality
\begin{equation}\label{Equation:inegalitetriangulaire}
\gamma_{j,a}=x-\lambda_j^ay=\gamma_{i_0,a}+(\lambda_{i_0}^a-\lambda_j^a)y
\end{equation}
to estimate $\gamma_{0,a}$, $\gamma_{1,a}$ and $\gamma_{2,a}$. 

\smallskip
{\rm (i)} 
Suppose $i_0=0$. Using the hypotheses 
$(\ref{equation:yge2root3ofm})$ 
and $|\gamma_{i_0,a}|\le \root 3 \of m$, 
we deduce from $(\ref{Equation:inegalitetriangulaire})$ 
$$
\gamma_{1,a}= y\lambda_0^a\left(1+\BigOh
\left(
\frac{1}{\lambda_0^{a}}
\right)
\right)
\quad\hbox{and}\quad
\gamma_{2,a}= y\lambda_0^a\left(1+\BigOh
\left(
\frac{\lambda_2^a}{\lambda_0^a}
\right)
\right)
$$
and in this case the estimate for $|\gamma_{0,a}|$ results from 
$(\ref{Equation:produitgamma})$. 
Finally, we deduce the refined estimate 
$$
\gamma_{1,a}= y\lambda_0^a\left(1+\BigOh
\left(
\frac{1}{\lambda_0^{2a}}
\right)
\right)
$$
from $(\ref{Equation:inegalitetriangulaire})$ again.

\smallskip

{\rm (ii)} Suppose $i_0=1$. We have 
$$
\gamma_{0,a}= -y\lambda_0^a\left(1+\BigOh
\left(
\frac{1}{\lambda_0^a}
\right)
\right)
\quad \mbox{and} \quad 
\lambda_2^a-\lambda_1^a = \lambda_2^a \left(1+\BigOh
\left(\frac{1}{
\lambda_0^a}\right)\right).
$$
From the inequalities 
$$
|\gamma_{1,a}|\le \root 3 \of m \le \frac{y}{2}
\quad
 \hbox{and}
 \quad
 |\lambda_2^a-\lambda_1^a| \ge \frac{3}{4},
 $$
 we deduce, using $(\ref{Equation:inegalitetriangulaire})$, 
 $$
 |\gamma_{2,a}|\ge \frac{1}{4}y; 
 $$
 then the use of $(\ref{Equation:produitgamma})$ gives, for sufficiently large $\max\{n,a\}$, 
$$
 |\gamma_{1,a}|\le \frac{5m}{y^2\lambda_0^a}\le 
 \frac{5\root 3 \of m}{4 \lambda_0^a}\cdotp
 $$
Let us redo the same calculations, though this time, in $(\ref{Equation:inegalitetriangulaire})$, we replace the initial upper bound $|\gamma_{1,a}|\le \root 3 \of m $ by this last improved bound. This leads on the one hand to 
 $$
\gamma_{2,a}= -y \lambda_2^a \left( 1+ \BigOh
\left(
\frac{1}{\lambda_0^a}
\right)
\right). 
$$
On the other hand, thanks to $(\ref{Equation:produitgamma})$, we have
$$
|\gamma_{1,a}|= \frac{ m}{y^2 (\lambda_0+1)^a} \left( 1+ \BigOh
\left(
\frac{1}{\lambda_0^a}
\right)
\right).
$$ 
 Using this last upper bound in 
$(\ref{Equation:inegalitetriangulaire})$, we are led to the estimate for $|\gamma_{1,a}|$.

\smallskip
 {\rm (iii)} 
Suppose that $i_0=2$. The proof of this part mimics the proof of the part (ii). The estimate for $\gamma_{0,a}$ follows from 
$(\ref{Equation:inegalitetriangulaire})$. 
Next we use the inequalities
$$
|\gamma_{2,a}| \leq \frac{y}{2}
\quad\hbox{and}\quad |\lambda_2^a -\lambda_1^a| \geq \frac{3}{4}
$$
to obtain, for sufficiently large $\max\{n,a\}$, 
$$
|\gamma_{2,a}| \leq \frac{5\root 3 \of m}{4\lambda_0^a}\cdotp
$$
Using this last upper bound in 
$(\ref{Equation:inegalitetriangulaire})$, we are led to the estimate for 
$\gamma_{1,a}$ given in the lemma and we use it with $(\ref{Equation:produitgamma})$ to obtain the estimate for $|\gamma_{2,a}|$.
\end{proof} 

Note that we have 
$$
|\gamma_{i_0,a}| \le \left\{
\begin{array}{lll}
\displaystyle
\frac{2m}{y^2 \lambda_0^{2a}} 
&\mbox{if $\; i_0=0$},
\\ [5mm]
\displaystyle
\frac{2m}{y^2 ( \lambda_0+1)^a} 
&\hbox{if $\; i_0\in \{1, 2\}$}.
\\ 
\end{array}\right.
$$

\section{Rewriting an element of norm $\boldmath m$}

Any pair of elements among $\{\lambda_0,\, \lambda_1,\,\lambda_2\}$ is a fundamental system of units for the cubic field $\Q(\lambda_0)$ (p.~237 of \cite{T}). 

Using a result of Mignotte, Peth\H{o} and Lemmermeyer, we can write an element of norm $m$ 
in a way that we have a better control on it. The result is obtained as 
 a consequence of Lemma 3 of \cite{MPL} and it reads as follows.

\smallskip

\begin{lemme}\label{Lemme:MPL}
Let $\gamma$ be a nonzero element of $\Z[\lambda_0]$ of norm $\Norm(\gamma)=m$. Then there exist some integers $A$, $B$ and some nonzero element $\delta$ of $\Z[\lambda_0]$, with conjugates $\delta_0=\delta$, $\delta_1$ and $\delta_2$, such that
$$
\gamma=\delta \lambda_0^A\lambda_2^B
$$
with, for any $n\ge 3$, 
$$
\frac{\root 3 \of m}{\sqrt{n+3}}
\le | \delta_i|\le 
 \sqrt{n+3} \root 3 \of m \quad \mbox{for }i\in \{1,2\}
\quad
\mbox{and} \quad
\frac{\root 3 \of m}{n+3} \le 
|\delta_0|\le (n+3) \root 3 \of m.
$$
Moreover, if $m=1$, then $|\delta_0|=|\delta_1|=|\delta_2|=1$.
\end{lemme}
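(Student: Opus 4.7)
The plan is to derive this lemma by directly invoking Lemma~3 of \cite{MPL} and then converting its regulator-dependent conclusions into the explicit multiplicative bounds stated here. Since $\Q(\lambda_0)$ is a totally real cubic cyclic field with fundamental system of units $\{\lambda_0,\lambda_2\}$ (cf.\ p.~237 of \cite{T}), the logarithmic embedding
$$
L(\eta)=\bigl(\log|\sigma_0(\eta)|,\log|\sigma_1(\eta)|,\log|\sigma_2(\eta)|\bigr)\in\R^3
$$
identifies $\Z[\lambda_0]^\times$ modulo torsion with the rank-$2$ lattice $\Lambda=\Z\, L(\lambda_0)+\Z\, L(\lambda_2)$ sitting inside the hyperplane $H_0:x_0+x_1+x_2=0$. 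For a nonzero $\gamma\in\Z[\lambda_0]$ of norm $m$, the image $L(\gamma)$ lies in the parallel affine hyperplane $H_m:x_0+x_1+x_2=\log m$, and the goal is to shift it by an element of $\Lambda$ so as to land in a prescribed bounded fundamental domain centered at $(\tfrac{1}{3}\log m,\tfrac{1}{3}\log m,\tfrac{1}{3}\log m)$.

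First, I would apply Lemma~3 of \cite{MPL} in the following form: there exist rational integers $A,B$ such that $\delta=\gamma\lambda_0^{-A}\lambda_2^{-B}$ has $L(\delta)$ lying in a specified fundamental parallelogram of $\Lambda+L(\gamma)$ whose side lengths are controlled by the norms $\|L(\lambda_0)\|$ and $\|L(\lambda_2)\|$. Setting $\gamma=\delta\lambda_0^A\lambda_2^B$ gives the required decomposition; by construction $\delta$ is a nonzero algebraic integer of norm $\pm m$.

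Second, I would use the bounds (\ref{equation:boundslambda}) to compute those log-norms. A short calculation gives $\log\lambda_0=\log n+O(1/n)$, $\log|\lambda_1|=-\log(n+1)+O(1/n)$ and $\log|\lambda_2|=O(1/n)$, so that, up to cyclic permutation of coordinates, each of $L(\lambda_0)$ and $L(\lambda_2)$ has the shape $(\pm\log n,\mp\log n,O(1/n))$. The fundamental parallelogram of $\Lambda$ is therefore strongly anisotropic, of diameter $O(\log n)$: two of the three coordinates $\log|\delta_i|-\tfrac{1}{3}\log m$ can be kept within $\tfrac{1}{2}\log(n+3)$ in absolute value, while the third is then forced by the product relation $|\delta_0\delta_1\delta_2|=m$ to lie within $\log(n+3)$ of its target. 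Exponentiation yields precisely
$$
\frac{\root 3\of m}{\sqrt{n+3}}\le|\delta_i|\le \sqrt{n+3}\,\root 3\of m\quad(i\in\{1,2\}),\qquad \frac{\root 3\of m}{n+3}\le|\delta_0|\le (n+3)\,\root 3\of m.
$$
The final case $m=1$ is immediate: $\gamma$ is then itself a unit, so $\delta$ is a unit whose image $L(\delta)$ has been forced into a bounded neighbourhood of the origin of $H_0$; this prescribed fundamental domain contains only the origin among lattice points of $\Lambda$, so $L(\delta)=0$ and $\delta$ is a torsion unit. Since $\Q(\lambda_0)$ is totally real, $\delta\in\{\pm1\}$ and $|\delta_i|=1$ for each $i$.

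The main obstacle will be tracking the sharp constants in step two. Lemma~3 of \cite{MPL} is formulated with general regulator-type bounds, and in order to match the explicit factors $\sqrt{n+3}$ and $n+3$ one must exploit the fact that $L(\lambda_0)$ and $L(\lambda_2)$ each have one coordinate of size only $O(1/n)$, making the fundamental parallelogram long-and-thin rather than round, and choose the fundamental domain in accordance with this anisotropy so that the ``short'' direction is carried by exactly one of the three conjugates (namely $\delta_0$).
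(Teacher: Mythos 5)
Your proposal follows the same route as the paper: the paper's own proof consists of a direct appeal to Lemma~3 of \cite{MPL}, and that is precisely your first step as well, while your treatment of the $m=1$ case (identifying $\delta$ as a torsion unit, hence $\pm1$) is in substance the same as the paper's remark that $\{\lambda_0,\lambda_2\}$ is a fundamental system of units of $\Z[\lambda_0]$.

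The one concrete thing you leave unresolved, and indeed explicitly flag as the ``main obstacle,'' is how the specific factors $\sqrt{n+3}$ and $n+3$ emerge. The paper closes this by a one-line choice of parameters when invoking Lemma~3 of \cite{MPL}: one takes $c_1=c_2=\root 3\of m\big/\sqrt{n+3}$, and the asymmetric bounds on $|\delta_0|$ versus $|\delta_1|,|\delta_2|$ then drop out of that lemma without further geometric analysis of the fundamental parallelogram. Your discussion of the logarithmic embedding and the long-and-thin shape of the lattice generated by $L(\lambda_0),L(\lambda_2)$ is sound motivation for why such an anisotropic bound is possible, but by itself it does not pin down the constants; to complete the argument you need to actually make the choice of $c_1,c_2$ above and read off the output of Lemma~3 of \cite{MPL}. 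A second, smaller point worth recording is notational: the paper notes that the conjugates $\lambda_0,\lambda_1,\lambda_2$ in the present normalization (following \cite{T}) correspond to $\lambda^{(3)},\lambda^{(1)},\lambda^{(2)}$ in the normalization of \cite{MPL}, and without this re-indexing the roles of the three conjugates in the conclusion of Lemma~3 of \cite{MPL} would not match the asymmetry $|\delta_0|$ versus $|\delta_1|,|\delta_2|$ as stated here.
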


The statement of the last lemma follows from Lemma 3 of \cite{MPL} by taking 
$$
c_1=c_2=\frac{\root 3 \of m}{\sqrt{n+3}}\cdotp
$$
As in \cite{T}, we use $\lambda=\lambda_0$, $\lambda_1$ and $\lambda_2$, but 
 in \cite{MPL} these elements correspond respectively to $\lambda^{(3)}$, $\lambda^{(1)}$ and $\lambda^{(2)}$.
 
 \smallskip
The last estimations for $\delta_i$ with $i \in \{1,2\}$ and for $\delta_0$ lead to the inequalities
$$
\left|
\log |\delta_i|-\frac{1}{3}\log m
\right|
\le 
\frac{1}{2}\log (n+3)
\quad\hbox{and}\quad
\left|
\log |\delta_0|-\frac{1}{3}\log m
\right|
\le 
 \log (n+3).
$$
Moreover, we have 
$$
\prod_{i=1}^3 \max\{1,|\delta_i|\}\le (n+3)^2 m;
$$
since $\delta$ is an algebraic integer, we deduce 
\begin{equation}\label{Equation:Hauteur(delta)} 
\rmh(\delta)\le \frac{2}{3} \log (n+3)+ \frac{1}{3} \log m.
\end{equation}

In the case $m=1$, the conclusion of the lemma holds with $\delta=\pm 1$, since $\{\lambda_0, \lambda_2\}$ is a fundamental system of units for the ring $\Z[\lambda_0]$.

\section{Some estimations on the integers $\boldmath A$ and $\boldmath B$}\label{Section:IntroductionAetB}

We will use $\{\lambda_0, \lambda_2\}$ for a fundamental system of units for the cubic field $\Q(\lambda_0)$. Note that we have 
\begin{equation}\label{Equation:loglambda2} 
\left|
\log|\lambda_2| - \frac{1}{\lambda_0}\right|\le 
\frac{1}{2\lambda_0^2}
\end{equation}
and 
$$
\log|\lambda_1| =-\log\lambda_0-\log|\lambda_2|.
$$
The relation $(\ref{Equation:produitgamma})$ and Lemma $\ref{Lemme:MPL}$ indicate that there exist
 rational integers $A$, $B$ and an element $\delta_0\in\Z[\lambda_0]$, with conjugates $\delta_1$ and $\delta_2$, verifying the conclusion of Lemma $\ref{Lemme:MPL}$, such that, via Galois actions, we have 
\begin{equation}
\left\{
\begin{array}{rcl}
\gamma_{0,a}&=&\delta_0 \lambda_0^A \lambda_2^B,\\[2mm]
\gamma_{1,a}&=&\delta_1 \lambda_1 ^A\lambda_0^B\;
= \;
\delta_1
\lambda_0^{-A+B}
\lambda_2^{-A},\\[2mm]
\gamma_{2,a}&=&\delta_2 \lambda_2^A \lambda_1 ^B\;
=\;\delta_2
\lambda_0^{-B}
\lambda_2^{A-B}. 
\end{array} \right.
\end{equation}

Let us estimate $A$ and $B$. 
Writing $c_i=\log |\gamma_{i,a}|-\log |\delta_i|$ (for $i=0,1,2$), we obtain 
$$\left\{
\begin{array} {rcl}
A\log \lambda_0+B\log|\lambda_2|&=&c_0,
\\[2mm]
A\log |\lambda_1|+B\log \lambda_0&=&c_1,
\\[2mm]
A\log |\lambda_2|+B\log|\lambda_1|&=&c_2.
\end{array}\right. 
$$
The first two equations suffice to find the values $A$ and $B$.
The determinant $R$, where
$$
R= (\log \lambda_0)^2-(\log|\lambda_1| )(\log|\lambda_2|)\ge (\log \lambda_0)^2,
$$
is not zero.
 We have 
$$
A=\frac{1}{R}(c_0\log \lambda_0-c_1\log|\lambda_2|)\quad
\hbox{and}\quad 
B=-\frac{1}{R}(c_0 \log |\lambda_1|-c_1\log \lambda_0),
$$
which, by taking into account the relation $\lambda_0\lambda_1\lambda_2=1$, we write as
\begin{equation}
A=\frac{1}{R}\bigl(c_0\log\lambda_0-c_1\log|\lambda_2|\bigr)\quad
\hbox{and}\quad 
B=\frac{1}{R}\bigl((c_0+c_1)\log \lambda_0+c_0\log|\lambda_2|\bigr).
\end{equation}
The following estimates for 
$$
c_0=\log |\gamma_{0,a}|-\log |\delta_0|
\quad
\hbox{and}\quad
c_1=\log |\gamma_{1,a}|-\log |\delta_1|
$$
result from Lemma $\ref{Lemme:Estimations}$. 

\smallskip
\begin{lemme}\label{lemme:c0etc1}
$\;$
{\rm (i)} Suppose $i_0=0$. Then we have 
$$ \left\{ 
\begin{array}{lll}
c_0&=& 
\log m -2\log y-2a\log\lambda_0-\log |\delta_0|+
\BigOh
\left( \displaystyle 
\frac{\lambda_2^a}{\lambda_0^a}
\right),\\[3mm]
c_1&=&
\log y+a\log\lambda_0-\log |\delta_1|+
\BigOh\left(\displaystyle
\frac{1}{\lambda_0^{2a}}
\right).\\
\end{array} \right.
$$ 

 {\rm (ii)} Suppose $i_0=1$. Then we have 
$$
\left\{ 
\begin{array}{lll}
c_0&=&\log y+ a\log\lambda_0-\log |\delta_0|+
\BigOh\left( \displaystyle
\frac{1}{\lambda_0^{2a}}
\right),\\[3mm]
c_1&=&\log m -2\log y - a\log\lambda_0 - a\log|\lambda_2| -\log |\delta_1| 
+ \BigOh\left(\displaystyle
\frac{1}{\lambda_0^a}
\right).\\
\end{array} \right.
$$ 

 {\rm (iii)} Suppose $i_0=2$. Then we have 
$$
\left\{ 
\begin{array}{lll}
c_0&=& 
\log y +a\log \lambda_0-\log|\delta_0|+
\BigOh\left( \displaystyle
\frac{\lambda_2^a}{\lambda_0^a}
\right),\\[3mm]
c_1&=&
\log y +a\log |\lambda_2|-\log|\delta_1|+
\BigOh\left( \displaystyle
\frac{1}{\lambda_0^a}\right).
\end{array} \right.
$$
\end{lemme}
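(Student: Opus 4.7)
The plan is to deduce the three estimates by taking logarithms of the asymptotic formulas furnished by Lemma~\ref{Lemme:Estimations}, then subtracting $\log|\delta_i|$ according to the definition of $c_i$.

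First, I would apply part (iv) of Lemma~\ref{Lemme:BigOh}, specifically the implication~$(\ref{Equation:logU=logV+bigoh})$, to the three cases of Lemma~\ref{Lemme:Estimations}. In case $i_0=0$, the formula
$|\gamma_{0,a}|= \frac{m}{y^2 \lambda_0^{2a}}\bigl(1+\BigOh(\lambda_2^a/\lambda_0^a)\bigr)$
gives $\log|\gamma_{0,a}| = \log m - 2\log y - 2a\log\lambda_0 + \BigOh(\lambda_2^a/\lambda_0^a)$, and the formula $\gamma_{1,a}=y\lambda_0^a\bigl(1+\BigOh(1/\lambda_0^{2a})\bigr)$ gives $\log|\gamma_{1,a}| = \log y + a\log\lambda_0 + \BigOh(1/\lambda_0^{2a})$. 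Subtracting $\log|\delta_0|$ and $\log|\delta_1|$ respectively yields the announced expressions for $c_0$ and $c_1$. Case $i_0=2$ is handled analogously, using $\gamma_{0,a}=-y\lambda_0^a\bigl(1+\BigOh(\lambda_2^a/\lambda_0^a)\bigr)$ and $\gamma_{1,a}= y\lambda_2^a\bigl(1+\BigOh(1/\lambda_0^a)\bigr)$.

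The only step requiring a small twist is case $i_0=1$. Taking logarithms of $|\gamma_{1,a}| = \frac{m}{y^2(\lambda_0+1)^a}\bigl(1+\BigOh(1/\lambda_0^a)\bigr)$ produces a $-a\log(\lambda_0+1)$ term, which must be rewritten in terms of $\log\lambda_0$ and $\log|\lambda_2|$ to match the statement of the lemma. I would invoke the defining relation $\lambda_1 = -1/(\lambda_0+1)$ to obtain $\log(\lambda_0+1) = -\log|\lambda_1|$, and then use $\lambda_0\lambda_1\lambda_2=1$ (the constant term of $f_n$ is $-1$) to rewrite $-\log|\lambda_1| = \log\lambda_0 + \log|\lambda_2|$. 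Thus $-a\log(\lambda_0+1) = -a\log\lambda_0 - a\log|\lambda_2|$, which gives the stated form of $c_1$. The expression for $c_0$ follows as before from $\gamma_{0,a}=-y\lambda_0^a\bigl(1+\BigOh(1/\lambda_0^{2a})\bigr)$.

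The proof is essentially a routine translation of multiplicative estimates into additive ones, so no genuine obstacle arises; the only point one must keep track of is that the error terms $\BigOh$ are preserved under the logarithm by~$(\ref{Equation:logU=logV+bigoh})$, which requires $\max\{n,a\}$ to be sufficiently large so that the relative errors in Lemma~\ref{Lemme:Estimations} are bounded by $1/2$. This is already assumed in our framework. The subtraction of $\log|\delta_i|$, being exact, does not affect the error term.
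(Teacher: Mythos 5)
Your proof is correct and carries out exactly what the paper asks the reader to do (the paper itself only states that these estimates "result from Lemma~\ref{Lemme:Estimations}" and omits the computation). Taking logarithms via $(\ref{Equation:logU=logV+bigoh})$ of the multiplicative asymptotics, subtracting the exact term $\log|\delta_i|$, and in case $i_0=1$ converting $-a\log(\lambda_0+1)$ into $-a\log\lambda_0-a\log|\lambda_2|$ by means of $\lambda_1=-1/(\lambda_0+1)$ and $\lambda_0\lambda_1\lambda_2=1$ is the intended route. One small slip: $(\ref{Equation:logU=logV+bigoh})$ is part (ii) of Lemma~\ref{Lemme:BigOh}, not part (iv); the equation label you cite is right, only the clause number is off, and this does not affect the argument.
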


From Lemma $\ref{lemme:c0etc1}$ we deduce the following.

\smallskip

\begin{lemme}\label{Lemme:majorationMaxAB}
We have
$$
|A|+|B|
\le \Newcst{kappa:majorationMaxAB} \left( \frac{\log y 
}{\log \lambda_0}+ a \right).
$$
\end{lemme}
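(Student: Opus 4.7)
The plan is to use the explicit formulas
$$
A=\frac{1}{R}\bigl(c_0\log\lambda_0-c_1\log|\lambda_2|\bigr),\qquad
B=\frac{1}{R}\bigl((c_0+c_1)\log \lambda_0+c_0\log|\lambda_2|\bigr)
$$
derived at the start of \S\ref{Section:IntroductionAetB}, together with the lower bound $R\ge(\log\lambda_0)^2$ and the estimate $(\ref{Equation:loglambda2})$ which gives $|\log|\lambda_2||=\BigOh(1/\lambda_0)$. These yield
$$
|A|+|B|\;\le\;\Newcst{kappa:prelimAB}\frac{|c_0|+|c_1|}{\log\lambda_0}
$$
for $\max\{n,a\}$ sufficiently large (the contribution $|\log|\lambda_2||/R$ is of lower order than $1/\log\lambda_0$). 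So everything reduces to controlling $|c_0|$ and $|c_1|$.

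Next I would examine each of the three cases of Lemma~\ref{lemme:c0etc1} separately. In every case the estimates read $c_0,c_1=\bigl(\hbox{linear combination of $\log m,\log y,a\log\lambda_0,a\log|\lambda_2|$ and $-\log|\delta_i|$}\bigr)+\BigOh(1)$. Using $|\log|\lambda_2||\le 1$ (from $(\ref{Equation:loglambda2})$ with $\lambda_0\ge 1$), together with the bound
$$
|\log|\delta_i|| \;\le\; \tfrac{1}{3}\log m+\log(n+3)
$$
extracted from Lemma~\ref{Lemme:MPL}, one gets uniformly
$$
|c_0|+|c_1|\;\le\;\Newcst{kappa:c0c1}\bigl(\log m+\log y+a\log\lambda_0+\log(n+3)\bigr).
$$

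Finally I would absorb the terms $\log m$ and $\log(n+3)$ into the two quantities appearing on the right-hand side of the target inequality. The hypothesis $(\ref{equation:yge2root3ofm})$ gives $\log m\le 3\log y$, so $\log m/\log\lambda_0\le 3\log y/\log\lambda_0$. The term $\log(n+3)/\log\lambda_0$ is bounded by an absolute constant since $\log\lambda_0\ge\log(n+1/n)\ge\Newcst{kappa:logn}\log(n+3)$ for $n\ge 1$ by $(\ref{equation:boundslambda})$, and this constant is then absorbed either into $a$ (recall $a\ge 2$) or, trivially, into $\log y/\log\lambda_0$ since $y\ge 1$ implies nothing, so really it contributes an additive constant which we fold into $\cst{kappa:majorationMaxAB}$ by choosing it large enough. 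The contribution $a\log\lambda_0/\log\lambda_0=a$ is exactly the second term in the stated bound. Combining all of this yields
$$
|A|+|B|\;\le\;\cst{kappa:majorationMaxAB}\Bigl(\frac{\log y}{\log\lambda_0}+a\Bigr),
$$
as desired. The only mildly delicate point is the case analysis and the observation that the $a\log|\lambda_2|$ term appearing in case (ii) and (iii) of Lemma~\ref{lemme:c0etc1} is $\BigOh(a/\lambda_0)=\BigOh(a)$, so it is harmless; every other step is routine bookkeeping with the $\BigOh$ calculus of Lemma~\ref{Lemme:BigOh}.
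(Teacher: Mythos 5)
Your argument is correct and reproduces the deduction the paper leaves implicit when it says ``From Lemma~\ref{lemme:c0etc1} we deduce the following'': plug the estimates for $c_0,c_1$ into the explicit formulas for $A,B$, use $R\ge(\log\lambda_0)^2$ and the smallness of $\log|\lambda_2|$, bound $|\log|\delta_i||$ via Lemma~\ref{Lemme:MPL}, and absorb $\log m$ into $\log y$ through the standing hypothesis $(\ref{equation:yge2root3ofm})$ and the additive constant into $\kappa$ using $a\ge 2$. One small remark: rather than saying the $\log|\lambda_2|$ contribution is of ``lower order'' (which isn't literally true when $n$ is held fixed), the clean observation is simply that $|\log|\lambda_2||\le\log\lambda_0$ for $n\ge1$, which already gives $|A|+|B|\le 3(|c_0|+|c_1|)/\log\lambda_0$; everything else in your bookkeeping then goes through unchanged.
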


\section{The Siegel equation}

The Siegel equation 
$$
\gamma_{i_0,a}(\lambda_{i_1}^a-\lambda_{i_2}^a)+
\gamma_{i_1,a}(\lambda_{i_2}^a-\lambda_{i_0}^a)+
\gamma_{i_2,a}(\lambda_{i_0}^a-\lambda_{i_1}^a)=0
$$
leads to the identity 
$$
\frac{\gamma_{i_1,a}(\lambda_{i_2}^a-\lambda_{i_0}^a)}
{\gamma_{i_2,a}(\lambda_{i_1}^a-\lambda_{i_0}^a)}
-1= -
\frac{\gamma_{i_0,a}(\lambda_{i_1}^a-\lambda_{i_2}^a)}
{\gamma_{i_2,a}(\lambda_{i_1}^a-\lambda_{i_0}^a)},
$$
which will be used later.
 From Lemma $\ref{Lemme:Estimations}$ we deduce the inequalities
\begin{equation}\label{Equation:majoration}
0<
\left|
\frac{\gamma_{i_1,a}(\lambda_{i_2}^a-\lambda_{i_0}^a)}
{\gamma_{i_2,a}(\lambda_{i_1}^a-\lambda_{i_0}^a)}
-1
\right|
\le 
\frac{ 2m 
}{ y^3 \lambda_0^a 
} \cdotp 
\end{equation}

\section{Switching from $\boldmath A$ and $\boldmath B$ to $\boldmath A'$ and $\boldmath B'$}

 Since 
$$ \left\{
\begin{array}{lll} \displaystyle
\frac{\gamma_{1,a}}{\gamma_{2,a}}
& =& \displaystyle\frac{\delta_1}{\delta_2} 
\lambda_0^{-A+2B}
\lambda_2^{-2A+B},
\\[5mm] \displaystyle
 \frac{\gamma_{2,a}}{\gamma_{0,a}}& 
 =& \displaystyle\frac{\delta_2}{\delta_0} 
\lambda_0^{-A-B} \lambda_2^{A-2B},
\\[5mm] \displaystyle\frac{\gamma_{0,a}}{\gamma_{1,a}} 
& =& \displaystyle\frac{\delta_0}{\delta_1} 
\lambda_0^{2A-B}
\lambda_2^{A+B},
\end{array} \right.
$$
 we are led to introduce 
 $$
 (A',B')=
 \begin{cases} 
 (-A+2B, -2A+B) & \mbox{ for $\; i_0=0$},
 \\[2mm]
 (-A-B, A-2B) & \mbox{ for $\; i_0=1$},
\\[2mm]
 (2A-B, A+B) & \mbox{ for $\; i_0=2$},
 \\[2mm]
 \end{cases}
 $$
so we can write 
 \begin{equation}\label{Equation:fll}
 \frac{\gamma_{i_1,a}(\lambda_{i_2}^a-\lambda_{i_0}^a)}
{\gamma_{i_2,a}(\lambda_{i_1}^a-\lambda_{i_0}^a)}
=\mu \lambda_0^{A'}
\lambda_2^{B'}
\end{equation}
 with
 $$
 \mu=
\frac{\delta_{i_1}}{\delta_{i_2}} \; 
\left( \frac{ \lambda_{i_2}^a-\lambda_{i_0}^a}
{ \lambda_{i_1}^a-\lambda_{i_0}^a}\right)\cdot
$$
Since
$$
\rmh(\lambda_0)=\frac{1}{3}\bigl(\log\lambda_0 + \log|\lambda_2|\bigr)=\frac{1}{3} \log(\lambda_0+1)
$$
and since, by the inequality $(\ref{Equation:Hauteur(delta)})$, 
$$
\rmh\left(\frac{\delta_{i_1}}{\delta_{i_2}} \right)\le 
2\rmh(\delta)\le \frac{4}{3} \log (n+3)+\frac{2}{3}\log m,
$$
we have 
$$
\rmh(\mu)\le 
\frac{4}{3} \log (n+3)+\frac{2}{3}\log m
+\frac{4}{3} a \log(\lambda_0+1)+2\log 2,
$$ 
whereupon we obtain
\begin{equation}\label{Equation:hauteurdemu}
\rmh(\mu)\le
3 \bigl(\log m + a \log (n+3)\bigr).
\end{equation} 
 By using $(\ref{Equation:fll})$, we write $(\ref{Equation:majoration})$ as
\begin{equation}\label{Equation:inegalitefondamentale}
0<\left|
 \mu
\lambda_0^{A'} \lambda_2^{B'}
-1
\right|
\le 
\frac{2 m 
}{ y^3 \lambda_0^a } \cdotp 
\end{equation}  
 
\section{Proof of the second part of Theorem $\ref{Theoreme:principal}$}
 
We are now ready to write the 

\smallskip
\begin{proof}[\indent Proof of the part (ii) of Theorem $\ref{Theoreme:principal}$.]
Let us write the right member of $(\ref{Equation:fll})$ in the form
$$
 \lambda_0^{A'} \lambda_2^{B'} \mu
=
\gamma_1^{c_1}\gamma_2^{c_2}\gamma_3^{c_3},
$$ 
with 
$$
\gamma_1=\lambda_0,\quad
\gamma_2=\lambda_2,\quad
\gamma_3=\mu,
\quad
c_1=A',\quad c_2=B', \quad c_3=1.
$$
The inequality $(\ref{Equation:hauteurdemu})$ provides an upper bound for the height of $\mu$. Note that 
$$
 |A'|+ |B'| \le 3 ( |A| + |B|).
 $$
Let us use Proposition 2 of \cite{LW} with 
$$ 
s=3,\quad 
H_1=H_2=\Newcst{kappa:H1etH2} \log n, 
\quad
H_3=3\cst{kappa:H1etH2} (\log m+a\log n), 
$$
$$
C=(|A|+|B|) \frac{ \log n}{\log m+a\log n} +2.
$$
 This gives
$$
\left|
\frac{\gamma_{i_1,a}(\lambda_{i_2}^a-\lambda_{i_0}^a)}
{\gamma_{i_2,a}(\lambda_{i_1}^a-\lambda_{i_0}^a)}
-1
\right| \ge \exp \left\{ -\Newcst{kappa:majfll1} (\log m+a\log n)
(\log n)^2 \log C \right\}.
$$
We deduce from $(\ref{Equation:majoration})$ the existence of a constant $\Newcst{kappa:majfll2}$ such that
\begin{equation}\label{Equation:logymaj}
 \log y \le 
\cst{kappa:majfll2}(\log m+a\log n) (\log n)^2\log C.
\end{equation}
Then Lemma $\ref{Lemme:majorationMaxAB}$ leads to 
$$
|A|+|B| \le 
\Newcst{kappa:majfll3} (\log m+a\log n) (\log n) \log C,
$$
hence
$$
C
 \le 
2\cst{kappa:majfll3} (\log n)^2 \log C,
$$
whereupon
$$
C
 \le 
\Newcst{kappa:Lev} (\log n)^2\log\log n.
$$ 
This leads to
\begin{equation}\label{Equation:majorationAetB}
|A|+|B|\le \cst{kappa:majfmajA+B} (\log m+a\log n)(\log n) 
\log \log n
\end{equation}
with $\Newcst{kappa:majfmajA+B}>0$. 
From $(\ref{Equation:logymaj})$ we deduce
$$
 \log y \le 
\Newcst{kappa:majfll7} (\log m+a\log n) (\log n)^2\log \log n.
$$
Finally, 
$$
|x|\le |\gamma_{i_0,a}|+ y |\lambda_{i_0}^a| \leq \root 3 \of m
 +y\lambda_0^a\leq \frac12 y +y\lambda_0^a \leq 
 2y \lambda_0^a,
$$
hence
$$
 \log |x| \le 
2\cst{kappa:majfll7} (\log m+a\log n) (\log n)^2\log \log n.
 $$
This secures the proof of the part (ii) of Theorem $\ref{Theoreme:principal}$. 
\end{proof}

\indent
\begin{remarque}
When we suppose 
$n\ge 3$, 
$$
m\le \frac{n}{(\log n)^3}
$$
and
\begin{equation}\label{Equation:apetit}
2\le a \le \frac{{n}}{(\log n)^4},
\end{equation} 
the upper bound $(\ref{Equation:majorationAetB})$ 
gives 
\begin{equation}\label{Equation:majorationAetBm=1}
|A|+|B|\le 2\cst{kappa:majfmajA+B} n \frac{
\log \log n}{(\log n)^2} \cdotp
\end{equation} 
\end{remarque}

\section{Estimations of $\boldmath \log y$ }\label{Section:estimationdey}

In this section, we combine 
Lemma $\ref{Lemme:Estimations}$ with the results of Section 
 $\ref{Section:IntroductionAetB}$;
 this allows to estimate $\log y$ in two different ways. A comparison between the two estimates will provide some relations, 
another proof of which being given in Lemma $\ref{Lemme:consequenceestimationy}$. 

\smallskip

 \begin{lemme}\label{Lemme:minorationy}
$\;$ {\rm (i)} If $i_0=0$, then we have 
$$
\begin{array}{lll}
\log y &=& \log |\delta_1| - (A-B+a) \log \lambda_0 - A \log |\lambda_2|
+ \BigOh 
\left( \displaystyle
\frac{1}{\lambda_0^{2a}}
\right),
\\[3mm]
&=& \log |\delta_2| - (B+a) \log \lambda_0 + (A-B)\log |\lambda_2|
+ \BigOh 
\left(\displaystyle
\frac{\lambda_2^a}{\lambda_0^a}
\right),
\end{array}
$$

{\rm (ii)} If $i_0=1$, then we have 
$$
\begin{array}{lll} 
\log y&=&
 \log |\delta_2| -B \log \lambda_0 + (A-B-a)\log |\lambda_2|
+ \BigOh
\left(\displaystyle
\frac{1}{\lambda_0^a}
\right),
\\[3mm]
&=&\log |\delta_0| + (A-a) \log \lambda_0 + B\log |\lambda_2|
+ \BigOh
\left(\displaystyle
\frac{1}{\lambda_0^{2a}}\right)
\end{array}
$$ 

{\rm (iii)} If $i_0=2$, then we have 
$$
\begin{array}{lll} 
\log y& =& \log |\delta_1| - (A-B)\log \lambda_0 - (A+a)\log |\lambda_2|
+ \BigOh
\left(\displaystyle
\frac{1}{\lambda_0^a}
\right)
\\[3mm]
 &=& \log |\delta_0| + (A-a)\log \lambda_0 + B \log |\lambda_2|
+ \BigOh
\left(\displaystyle
\frac{\lambda_2^a}{\lambda_0^a}\right)
\end{array}
$$ 
\end{lemme}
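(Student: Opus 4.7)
The plan is to obtain each of the six formulas by computing $\log |\gamma_{j,a}|$ in two different ways and equating them: once from the unit factorization $\gamma_{j,a}=\delta_j \lambda_0^{\ast}\lambda_2^{\ast}$ recorded in Section \ref{Section:IntroductionAetB}, and once from the asymptotic estimates of Lemma \ref{Lemme:Estimations}. The key analytic input is part (ii) of Lemma \ref{Lemme:BigOh}, namely the implication $(\ref{Equation:logU=logV+bigoh})$, which turns a multiplicative factor $(1+O(W))$ into an additive error $O(W)$ after taking $\log$.

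More precisely, for each case $i_0\in\{0,1,2\}$, the multiplicative factorization gives linear expressions in $A$, $B$, $\log\lambda_0$, $\log|\lambda_2|$ for $\log|\gamma_{1,a}|$, $\log|\gamma_{2,a}|$, $\log|\gamma_{0,a}|$. In each case I would pick the two indices $j$ for which Lemma \ref{Lemme:Estimations} provides an expression of the form $\gamma_{j,a} = \pm y\lambda_{k}^{a}(1+O(W))$ for some $k\in\{0,2\}$ and some error term $W$ among $1/\lambda_0^{2a}$, $\lambda_2^a/\lambda_0^a$, $1/\lambda_0^a$. Applying $(\ref{Equation:logU=logV+bigoh})$ yields
\[
\log|\gamma_{j,a}| = \log y + a\log|\lambda_k| + \BigOh(W),
\]
and subtracting this identity from the linear expression given by the factorization isolates $\log y$ in the desired form. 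For instance, in the case $i_0=0$, the estimate $\gamma_{1,a}=y\lambda_0^a(1+O(1/\lambda_0^{2a}))$ combined with $\gamma_{1,a}=\delta_1\lambda_0^{-A+B}\lambda_2^{-A}$ produces directly the first formula of (i), and $\gamma_{2,a}=y\lambda_0^a(1+O(\lambda_2^a/\lambda_0^a))$ combined with $\gamma_{2,a}=\delta_2\lambda_0^{-B}\lambda_2^{A-B}$ produces the second. The cases $i_0=1$ and $i_0=2$ are treated identically; one simply reads off which of $\lambda_0$ or $\lambda_2$ appears in the asymptotic estimate of Lemma \ref{Lemme:Estimations} (hence whether the term $a\log\lambda_0$ or $a\log|\lambda_2|$ gets absorbed into the linear combination) and records the size of the remaining error.

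There is no real obstacle: all six formulas are bookkeeping, consisting of matching exponents in the two expressions of $\log|\gamma_{j,a}|$, and the $\BigOh$-terms are inherited unchanged from Lemma \ref{Lemme:Estimations} via $(\ref{Equation:logU=logV+bigoh})$. The only point requiring minimal care is the sign: the estimates of Lemma \ref{Lemme:Estimations} sometimes give $\gamma_{j,a}=-y\lambda_k^a(1+O(W))$, but since we are taking $\log$ of absolute values this is harmless. It is also worth noting, although not strictly necessary for the statement, that comparing the two formulas available in each case yields a linear relation between $A$, $B$, $a$, $\log\lambda_0$ and $\log|\lambda_2|$ with a small error, which is the consistency check announced in the paragraph preceding the lemma (and which will be reformulated as Lemma \ref{Lemme:consequenceestimationy}).
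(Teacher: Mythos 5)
Your proposal is correct and follows essentially the same route as the paper: equating the unit factorization $\gamma_{j,a}=\delta_j\lambda_0^{\ast}\lambda_2^{\ast}$ with the asymptotic $\gamma_{j,a}=\pm y\lambda_k^a(1+\BigOh(W))$ from Lemma \ref{Lemme:Estimations}, then extracting $\log y$ via Lemma \ref{Lemme:BigOh}(ii). The only cosmetic difference is that the paper first solves multiplicatively for $y$ and then takes logarithms, whereas you take logarithms of $|\gamma_{j,a}|$ first and then isolate $\log y$; the two orderings are equivalent.
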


\begin{proof}[\indent Proof] {\rm (i)}
Suppose $i_0=0$. Then we have 
$$
\gamma_{1,a} = 
\delta_1 \lambda_0^{-A+B} \lambda_2^{-A} 
= y \lambda_0^a\left( 1+ \BigOh
\left(
\frac{1}{\lambda_0^{2a}}
\right)
\right),
$$
from which we deduce
$$
y = 
\delta_1 \lambda_0^{-A+B-a}
 \lambda_2^{-A}
 \left(1+ \BigOh
 \left(
\frac{1}{\lambda_0^{2a}}
\right)
 \right).
$$
We also have 
$$
\gamma_{2,a} =\delta_2 \lambda_0^{-B} \lambda_2^{A-B}=
 y \lambda_0^a \left( 1+ \BigOh\left(
\frac{\lambda_2^a}{\lambda_0^a}
\right)\right), 
$$ 
 hence
 $$
 y = 
\delta_2 \lambda_0^{-B-a}
 \lambda_2^{A-B}
 \left(1+ \BigOh
 \left(
\frac{\lambda_2^a}{\lambda_0^a}
\right) \right).
$$

 {\rm (ii)}
Suppose $i_0=1$. Then we have 
$$
\gamma_{2,a}= \delta_2 \lambda_0^{-B} \lambda_2^{A-B}
=
-y \lambda_2^a \left( 1+ \BigOh\left(
\frac{1}{\lambda_0^a}
\right)\right),
 $$
from which we deduce 
$$
y=- \delta_2 \lambda_0^{-B} \lambda_2^{A-B-a}
 \left(1+ \BigOh\
 \left(
\frac{1}{\lambda_0^a}
\right)
\right).
$$ 
We also have 
$$
\gamma_{0,a} = \delta_0 \lambda_0^A \lambda_2^B
=
-y \lambda_0^a\left( 1+ \BigOh
\left(
\frac{1}{\lambda_0^{2a}}
\right)
\right),
$$
hence
$$
y=- \delta_0 \lambda_0^{A-a} \lambda_2^{B}
 \left(1+ \BigOh\
 \left(
\frac{1}{\lambda_0^{2a}}
\right)
\right).
$$ 
 
{\rm (iii)}
Suppose $i_0=2$. Then we have 
 
$$
\gamma_{1,a}
=
 \delta_1 \lambda_0^{-A+B} \lambda_2^{-A}
 =
 y \lambda_2^a \left( 1+ \BigOh
 \left(
\frac{1}{\lambda_0^a}
\right)
\right),
$$
from which we deduce
$$
 y = \delta_1 \lambda_0^{-A+B} \lambda_2^{-A-a}
 \left(1+ \BigOh
 \left(
\frac{1}{\lambda_0^a}
\right)
\right).
 $$
 We also have 
 $$
\gamma_{0,a}
=
 \delta_0 \lambda_0^A \lambda_2^B
 =
 -y \lambda_0^a \left( 1+ \BigOh
 \left(
\frac{\lambda_2^a}{\lambda_0^a}
\right)
\right),
$$
hence
$$
 y =- \delta_0 \lambda_0^{A-a} \lambda_2^{B}
 \left(1+ \BigOh
 \left(
\frac{\lambda_2^a}{\lambda_0^a}
\right)
 \right).
 $$
\end{proof}
  
 \begin{lemme}\label{Lemme:consequenceestimationy}
 One has 
$$
\Lambda = 
 \begin{cases}
 \displaystyle
A'\log\lambda_0+B'\log|\lambda_2|+\log\frac{|\delta_1|}{|\delta_2|}+\BigOh
\left(
\frac{\lambda_2^a}{\lambda_0^a}
\right)
&
\hbox{if $\; i_0=0$, }
\\
\null
\\
 \displaystyle
(A'+a)\log\lambda_0+(B'-a)\log|\lambda_2|+\log\frac{|\delta_2|}{|\delta_0|}+
\BigOh
\left(
\frac{1}{\lambda_0^a}
\right)
&
\hbox{if $\;i_0=1$,}
\\
\null
\\
 \displaystyle
(A'-a)\log\lambda_0+(B'+a)\log|\lambda_2|+\log\frac{|\delta_0|}{|\delta_1|}
+\BigOh\left(
\frac{\lambda_2^a}{\lambda_0^a}
\right)
&\hbox{if $\; i_0=2$.}
\end{cases} 
$$
 \end{lemme}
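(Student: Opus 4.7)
My plan is to take the quantity $\Lambda$ to be the linear form in logarithms naturally attached to the Siegel relation $(\ref{Equation:fll})$, namely
$$
\Lambda \;=\; \log\bigl|\mu\,\lambda_0^{A'}\lambda_2^{B'}\bigr| \;=\; A'\log\lambda_0 + B'\log|\lambda_2| + \log|\mu|.
$$
The entire task is then to evaluate $\log|\mu|$ case by case with an explicit error term, the definition of $\mu$ being
$$
\mu \;=\; \frac{\delta_{i_1}}{\delta_{i_2}}\cdot \frac{\lambda_{i_2}^a-\lambda_{i_0}^a}{\lambda_{i_1}^a-\lambda_{i_0}^a}\cdotp
$$

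The main step is to plug into the second factor the three expansions
$\lambda_0^a-\lambda_1^a=\lambda_0^a(1+\BigOh(1/\lambda_0^{2a}))$, $\lambda_0^a-\lambda_2^a=\lambda_0^a(1+\BigOh(\lambda_2^a/\lambda_0^a))$ and $\lambda_2^a-\lambda_1^a=\lambda_2^a(1+\BigOh(1/\lambda_0^a))$ already recorded at the beginning of Section 5, then take absolute values and apply parts (ii) and (iii) of Lemma~\ref{Lemme:BigOh} to convert each multiplicative error $1+\BigOh(W)$ into an additive error $\BigOh(W)$ on the logarithm. Carrying this out: for $i_0=0$ the ratio equals $1+\BigOh(\lambda_2^a/\lambda_0^a)$, producing no shift and the first formula of the statement; for $i_0=1$ it equals $(\lambda_0/|\lambda_2|)^a\bigl(1+\BigOh(1/\lambda_0^a)\bigr)$, producing the shift $(A',B')\mapsto(A'+a,B'-a)$ and the second formula; for $i_0=2$ it equals $(|\lambda_2|/\lambda_0)^a\bigl(1+\BigOh(\lambda_2^a/\lambda_0^a)\bigr)$, producing the opposite shift $(A',B')\mapsto(A'-a,B'+a)$ and the third formula. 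In every case the factor $\delta_{i_1}/\delta_{i_2}$ supplies the logarithmic constant displayed.

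The independent derivation hinted at in the paragraph preceding the lemma, obtained by subtracting the two expressions for $\log y$ supplied by Lemma~\ref{Lemme:minorationy} in each case and substituting the definition of $(A',B')$ in terms of $(A,B)$, recovers the same three identities and serves as a consistency check. The real bookkeeping obstacle is twofold: first, tracking the $(-1)^a$ factors, which disappear once one passes to $|\,\cdot\,|$ but must be handled explicitly; second, verifying that the error term retained in the conclusion is actually the dominant one. For the latter I would invoke $1/\lambda_0^a \le \lambda_2^a/\lambda_0^a$ (valid because $|\lambda_2|\ge 1$ by $(\ref{equation:boundslambda})$) to absorb the smaller errors into the displayed ones. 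These absorptions are routine applications of Lemma~\ref{Lemme:BigOh}(iii,iv); no new idea beyond careful bookkeeping is needed.
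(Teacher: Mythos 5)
Your proof is correct and follows essentially the same route as the paper's: you decompose $\Lambda = A'\log\lambda_0 + B'\log|\lambda_2| + \log|\mu|$ and evaluate $\log|\mu|$ case by case from the expansions of the three differences $\lambda_i^a-\lambda_j^a$ recorded at the beginning of Section~5, which is exactly what the authors do. The only cosmetic difference is that the paper writes the asymptotics directly for the logarithms of the three relevant ratios, whereas you reach the same expressions by dividing the underlying expansions first — the same computation in slightly different dress.
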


\begin{proof}[\indent Proof]
 {\rm (i)} Suppose $i_0=0$. 
We just use 
$$
\Lambda=A'\log\lambda_0+B'\log|\lambda_2|+\log\frac{|\delta_1|}{|\delta_2|}+\log\frac{|\lambda_2^a - \lambda_0^a|}{|\lambda_1^a - \lambda_0^a|} 
$$
with
$$
 \log\frac{|\lambda_2^a - \lambda_0^a|}{|\lambda_1^a - \lambda_0^a|}=\BigOh
 \left(
\frac{\lambda_2^a}{\lambda_0^a}
\right).
 $$
 
 {\rm (ii)} Suppose $i_0=1$. 
This time, we use 
$$
\Lambda=A'\log\lambda_0+B'\log|\lambda_2|+\log\frac{|\delta_2|}{|\delta_0|}+\log\frac{|\lambda_0^a - \lambda_1^a|}{|\lambda_2^a - \lambda_1^a|} 
$$
with
$$
\log\frac{|\lambda_0^a - \lambda_1^a|}{|\lambda_2^a - \lambda_1^a|}
=a\log\lambda_0-a\log|\lambda_2|+\BigOh\left(
\frac{1}{\lambda_0^a}
\right).
$$ 

 {\rm (iii)} Suppose $i_0=2$. 
Here, we use 
$$
\Lambda=A'\log\lambda_0+B'\log|\lambda_2|+\log\frac{|\delta_0|}{|\delta_1|}+\log\frac{|\lambda_1^a - \lambda_2^a|}{|\lambda_0^a - \lambda_2^a| }
$$
with
$$
\log\frac{|\lambda_1^a - \lambda_2^a|}{|\lambda_0^a - \lambda_2^a|}
=-a\log\lambda_0+a\log|\lambda_2|+\BigOh\left(
\frac{\lambda_2^a}{\lambda_0^a}
\right).
$$ 
 \end{proof}
 
By using 
 $(\ref{Equation:loglambda2})$, $(\ref{Equation:majorationAetBm=1})$ and $(\ref{Equation:flllog})$, we deduce from Lemma $\ref{Lemme:consequenceestimationy}$ the following statement.

\smallskip

 \begin{corollaire}\label{corollaire:A-2B}
 Assume 
$\displaystyle
m\le \frac{n}{(\log n)^3}, \;
2\le a \le \frac{{n}}{(\log n)^4} 
$
and 
$n$ sufficiently large. 

\smallskip
\noindent {\rm (i)}
Suppose $i_0=0$. Then 
$|A-2B|\le 1$. Moreover, if $m=1$, then $A=2B$.

\smallskip

\noindent {\rm (ii)}
Suppose $i_0=1$. Then 
 $|A+B-a|\le 1$. If $m=1$, then $A+B=a$.
 
 \smallskip
 \noindent {\rm (iii)}
Suppose $i_0=2$. Then 
$|2A-B-a|\le 1$. If $m=1$, then $2A=B+a$.
 \end{corollaire}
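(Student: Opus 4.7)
The strategy is to exploit Lemma~\ref{Lemme:consequenceestimationy}, which in each case writes $\Lambda$ as a linear combination of $\log\lambda_0$, $\log|\lambda_2|$, and $\log(|\delta_{i_1}|/|\delta_{i_2}|)$ with a small error term. By $(\ref{Equation:inegalitefondamentale})$ and the hypothesis $y\ge 2\sqrt[3]{m}$, the quantity $\mu\lambda_0^{A'}\lambda_2^{B'}$ differs from $1$ by at most $1/(4\lambda_0^a)$, so its logarithm $\Lambda$ is itself of size $\BigOh(1/\lambda_0^a)$; this is the content of $(\ref{Equation:flllog})$. In each case $i_0\in\{0,1,2\}$, solving the identity of Lemma~\ref{Lemme:consequenceestimationy} for the coefficient of $\log\lambda_0$ produces
\[
E\log\lambda_0 \;=\; -F\log|\lambda_2|\;-\;G\;+\;\BigOh(1/\lambda_0^a),
\]
where the integer $E$ is respectively $A-2B$, $A+B-a$, or $2A-B-a$ (up to sign), where $F$ is $B'$, $B'-a$, or $B'+a$, and where $G$ is one of $\log(|\delta_1|/|\delta_2|)$, $\log(|\delta_2|/|\delta_0|)$, or $\log(|\delta_0|/|\delta_1|)$.

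Next I bound each summand on the right. By $(\ref{Equation:loglambda2})$ one has $|\log|\lambda_2||\le 2/n$ for $n$ large; the bound $(\ref{Equation:majorationAetBm=1})$ (applicable precisely because of the hypotheses on $m$ and $a$) together with $a\le n/(\log n)^4$ forces $|F|\le \kappa\, n\log\log n/(\log n)^2$, and consequently $|F\log|\lambda_2||=o(1)$. For the quotient of $\delta$'s, Lemma~\ref{Lemme:MPL} provides $|\log|\delta_i|-\tfrac13\log m|\le\tfrac12\log(n+3)$ for $i\in\{1,2\}$ and $\le\log(n+3)$ for $i=0$, whence $|G|\le\tfrac32\log(n+3)$ in all three cases; crucially, when $m=1$ the same lemma gives $|\delta_0|=|\delta_1|=|\delta_2|=1$, so $G=0$.

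Putting everything together yields $|E|\log\lambda_0\le\tfrac32\log(n+3)+o(1)$. Since $(\ref{equation:boundslambda})$ gives $\lambda_0>n$, the ratio $\tfrac32\log(n+3)/\log\lambda_0$ tends to $\tfrac32<2$, so for $n$ sufficiently large $|E|<2$; being an integer, $|E|\le 1$, which is the first assertion of each part. When $m=1$ the $G$-term vanishes, the right-hand side is $o(1)$, and the integer $E$ must equal $0$, yielding respectively $A=2B$, $A+B=a$, and $2A=B+a$. The main obstacle is the careful bookkeeping required to guarantee that the final ratio falls strictly below $2$: this relies on the sharper bound $\lambda_0>n$ from $(\ref{equation:boundslambda})$ rather than merely $\log\lambda_0\sim\log n$, and on the restrictive hypotheses on $m$ and $a$ being strong enough to render both $(\ref{Equation:majorationAetBm=1})$ available and the term $F\log|\lambda_2|$ genuinely $o(1)$.
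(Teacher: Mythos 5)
Your proof is correct and follows essentially the same route as the paper: both start from Lemma~\ref{Lemme:consequenceestimationy}, use $(\ref{Equation:flllog})$ to control $\Lambda$, exploit $(\ref{Equation:loglambda2})$ and the bound $(\ref{Equation:majorationAetBm=1})$ (valid under the stated hypotheses on $m$ and $a$) to make the $\log|\lambda_2|$-term negligible, bound the $\delta$-ratio via Lemma~\ref{Lemme:MPL}, and conclude that the integer coefficient of $\log\lambda_0$ must have absolute value $\le 1$ (and $=0$ when $m=1$, since then $|\delta_0|=|\delta_1|=|\delta_2|=1$). The only cosmetic difference is that the paper substitutes $\log|\lambda_2|=1/\lambda_0+\BigOh(1/\lambda_0^2)$ explicitly while you keep $\log|\lambda_2|$ and bound it; this does not change the argument.
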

 
 \begin{proof}[\indent Proof]
Thanks to Lemma $\ref{Lemme:consequenceestimationy}$, we have the 
following equalities: \\

\noindent $\bullet \;$ If $i_0=0$, then
$$
(A-2B)\log \lambda_0 + \log \frac{|\delta_2|}{|\delta_1|} + \frac {2A-B} {\lambda_0}
= \BigOh\left( 
\frac{|2A-B|+1}{\lambda_0^2}
\right).
$$
$\bullet \;$ 
If $\; i_0=1$, then
$$\displaystyle
(A+B-a)\log \lambda_0 + \log \frac{|\delta_0|}{|\delta_2|} - \frac{A-2B-a}{\lambda_0} 
= \BigOh\left( 
\frac{ |A-2B-a|+1}{\lambda_0^2}
\right).
$$
 $\bullet \;$ If $\; i_0=2$, then 
$$
\displaystyle 
(2A-B-a) \log \lambda_0 + \log \frac{|\delta_0|}{|\delta_1|} +\frac{A+B+a}{\lambda_0}
= \BigOh\left( 
\frac{|A+B+a|+1}{\lambda_0^2}
\right).
$$

In $\BigOh\displaystyle\left( \frac{|U|+1}{\lambda_0^2} \right)$ where $U$ is either $2A-B$, $A-2B-a$ or $A+B+a$, we included $+1$ in order to take into account the case $U=0$.
\medskip
 
The following inequalites, which come from Lemma $\ref{Lemme:MPL}$, will be used:
 $$
\left| \log \frac{|\delta_2|}{|\delta_1|}\right|
 \le \log (n+3),\quad 
 \left| \log \frac{|\delta_0|}{|\delta_2|}\right|
 \le \frac{3}{2} \log (n+3),
 \quad
 \left| \log \frac{|\delta_0|}{|\delta_1|}\right|
 \le \frac{3}{2} \log (n+3).
$$
Then we can count on $(\ref{Equation:majorationAetBm=1})$ and 
$(\ref{Equation:flllog})$ to claim that 
each of the terms 
$$
\frac {2A-B} {\lambda_0},
\quad
 \frac{A-2B-a}{\lambda_0},
 \quad
\frac {A+B+a} {\lambda_0},\quad 
\BigOh\left( 
\frac{|A|+|B|+a}{\lambda_0^2}
\right)
$$
goes to $0$ as 
$n$ goes to infinity. 
It happens that in each of the cases (i), (ii), (iii), we have a formula looking like 
$V\log\lambda_0 +\log \displaystyle \frac{|\delta_i|}{[\delta_j|}$, which goes to $0$.
Since $\log \lambda_0$ behaves like $\log n$, since $\log \displaystyle \frac{|\delta_i|}{[\delta_j|}$ is between $-(3/2)\log (n+3)$ and $+(3/2)\log (n+3)$ and since $V\in\Z$, we deduce that $V$ is between $-1$ and $+1$. 

\smallskip
Finally, in the case $m=1$, Lemma $\ref{Lemme:MPL}$ allows us to use the fact that 
 $|\delta_0|=|\delta_1|=|\delta_2|=1$.
 \end{proof}
 
 In taking the logarithms of the absolute values, we deduce from $(\ref{Equation:inegalitefondamentale})$ and from the assumption $(\ref{equation:yge2root3ofm})$
 that the number
$$
\Lambda=A'\log \lambda_0 + B'\log |\lambda_2|+\log| \mu|, \mbox{ with }\; \mu =
\frac{\delta_{i_1}}{\delta_{i_2}} \; 
\left( \frac{ \lambda_{i_2}^a-\lambda_{i_0}^a}
{ \lambda_{i_1}^a-\lambda_{i_0}^a}\right),
$$
verifies
\begin{equation}\label{Equation:flllog}
0< |\Lambda|
\le 
\frac{\cst{kappa:Equation:flllog}
 }{ \lambda_0^a }
\end{equation} 
with $\Newcst{kappa:Equation:flllog}>0$. 

As a consequence, we have
 \begin{equation}\label{Equation:formulesavecA',B'}
\begin{cases}
\displaystyle
A'\log \lambda_0 + \log \frac{|\delta_1|}{|\delta_2|} + B' \log|\lambda_2|
= \BigOh
\left(
\frac{\lambda_2^a}{\lambda_0^a}
\right)
&
\hbox{if $i_0=0$,}
\\
\null
\\
\displaystyle 
(-A'-a)\log \lambda_0 + \log \frac{|\delta_0|}{|\delta_2|} - (B'-a)\log|\lambda_2|
= \BigOh
\left(
\frac{1}{\lambda_0^a}
\right)
&
\hbox{if $i_0=1$,}
\\
\null
\\
\displaystyle 
(A'-a) \log \lambda_0 + \log \frac{|\delta_0|}{|\delta_1|} + (B'+a)\log|\lambda_2|
= \BigOh
\left(
\frac{\lambda_2^a}{\lambda_0^a}
\right)
&
\hbox{if $i_0=2$.}
\end{cases}
\end{equation} 

 These estimates also follow from Lemma $\ref{Lemme:minorationy}$ by using Lemma $\ref{Lemme:BigOh}$.

\section{Proof of the first part of Theorem $\boldmath \ref{Theoreme:principal}$ } \label{Section:DemonstrationTheorem1}
 
 Let us assume the hypotheses of Theorem $\ref{Theoreme:principal}$ with $m=1$ and let us also suppose 
$$
 2\le a \le \frac{{n}}{(\log n)^4} .
$$
We apply the previous results to the case $m=1$ 
by supposing that $n$ is sufficiently large, say $n\ge \Newcst{kappa:n0}$,
and by assuming that we have a solution with $y\geq 2$. 
Note that by Lemma $\ref{Lemme:MPL}$, we have $|\delta_0|=|\delta_1|=|\delta_2|=1$.
Depending upon the values of $i_0$, there are three cases to consider. 

\smallskip

{\rm (i)} Suppose $i_0=0$. By Corollary $\ref{corollaire:A-2B}$,
we have $A=2B$ and from $(\ref{Equation:formulesavecA',B'})$ we deduce 
$$
(-2A+B) \log|\lambda_2|
= \BigOh\left(\frac{\lambda_2^a}{\lambda_0^a}\right),
$$
Hence $2A=B$, and consequently $A=B=0$. 
Now Lemma $\ref{Lemme:minorationy}$ implies the contradiction
$$
\log y = -a \log \lambda_0 
+\BigOh\left(
\frac{1}{\lambda_0^{2a}}
\right).
$$
This is not possible since $y\ge 2$.

\smallskip
 
 {\rm (ii)} Suppose $i_0=1$.
 Corollary $\ref{corollaire:A-2B}$ gives $A+B=a$. From $(\ref{Equation:formulesavecA',B'})$ we deduce
$$
-(A-2B-a)\log|\lambda_2|
= \BigOh\left(
\frac{1}{\lambda_0^a}
\right).
$$
This last relation
 implies $A=2B+a$, hence $B=0$ and $A=a$. Then 
Lemma $\ref{Lemme:minorationy}$ implies 
$$ 
\log y = \BigOh\left(
\frac{1}{\lambda_0}
\right),
$$
which is not possible because $y\ge 2$.

\smallskip

 {\rm (iii)} Suppose $i_0=2$.
Thanks to Corollary $\ref{corollaire:A-2B}$ we have $2A-B=a$. 
From $(\ref{Equation:formulesavecA',B'})$, we deduce
$$
(A+B+a ) \log|\lambda_2|
= \BigOh\left(\frac{\lambda_2^a}{\lambda_0^a}\right),
$$
hence $A+B=-a$. Therefore $A=0$ and $B=-a$. Then
Lemma $\ref{Lemme:minorationy}$ implies 
$$
\log y = -a \log \lambda_0 -a \log |\lambda_2 |
+\BigOh\left(
\frac{1}{\lambda_0}
\right).
$$
This is not possible since $y\ge 2$.

\smallskip

This proves that under the hypotheses of the part (i) of Theorem $\ref{Theoreme:principal}$ in the case $(\ref{Equation:apetit})$, we have $n<\cst{kappa:n0}$. This implies that $a$ is also bounded because of $(\ref{Equation:apetit})$. 

 \section{Proof of the third part of Theorem $\boldmath \ref{Theoreme:principal}$ }\label{Section:DemonstrationTheoremeagrand}
 
From Lemma $\ref{Lemme:consequenceestimationy}$ we deduce the following. 

\smallskip

\begin{lemme}\label{Lemme:majorationfllcasagrand}
 One has 
 $$
 \begin{cases}
 \displaystyle
\left|
A'\log\lambda_0+B'\log|\lambda_2|+\log\frac{|\delta_1|}{|\delta_2|}
\right|
\le \frac{\Newcst{kappa:majorationfllcasagrand} |\lambda_2|^a}{ \lambda_0^{a}}
&
\hbox{if $\; i_0=0$, }
\\
\null
\\
 \displaystyle
\left|(A'+a)\log\lambda_0+(B'-a)\log|\lambda_2|+\log\frac{|\delta_2|}{|\delta_0|}
\right|
\le \frac{\cst{kappa:majorationfllcasagrand}}{ \lambda_0^{a}}
&
\hbox{if $\; i_0=1$,}
\\
\null
\\
 \displaystyle
\left|(A'-a)\log\lambda_0+(B'+a)\log|\lambda_2|+\log\frac{|\delta_0|}{|\delta_1|}
 \right|
\le \frac{\cst{kappa:majorationfllcasagrand} |\lambda_2|^a}{ \lambda_0^{a}}
&\hbox{if $\; i_0=2$.}
\end{cases}
$$
\end{lemme}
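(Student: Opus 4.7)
The plan is to deduce the three inequalities directly from Lemma \ref{Lemme:consequenceestimationy}, combined with the bound (\ref{Equation:flllog}), which asserts $|\Lambda|\le \cst{kappa:Equation:flllog}/\lambda_0^a$ uniformly in $i_0$. The observation is that Lemma \ref{Lemme:consequenceestimationy} already writes $\Lambda$ as the linear form appearing in each conclusion, plus an explicit $\BigOh(\cdot)$ error term whose shape matches the right-hand side of the target inequality.

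First, for each value of $i_0\in\{0,1,2\}$, I would isolate the linear form in $\log\lambda_0$ and $\log|\lambda_2|$ (including the constant term $\log(|\delta_i|/|\delta_j|)$) on one side of the corresponding identity in Lemma \ref{Lemme:consequenceestimationy}. The triangle inequality then bounds its absolute value by $|\Lambda|$ plus the error term: for $i_0=0$ and $i_0=2$ this gives $|\Lambda|+\BigOh(|\lambda_2|^a/\lambda_0^a)$, while for $i_0=1$ it gives $|\Lambda|+\BigOh(1/\lambda_0^a)$.

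Second, I would substitute the estimate $|\Lambda|\le\cst{kappa:Equation:flllog}/\lambda_0^a$ provided by (\ref{Equation:flllog}). In the case $i_0=1$, this immediately yields the desired $\BigOh(1/\lambda_0^a)$ bound. In the cases $i_0=0$ and $i_0=2$, it remains to absorb the term $|\Lambda|$ into $\BigOh(|\lambda_2|^a/\lambda_0^a)$; this follows from the bounds $(\ref{equation:boundslambda})$, which give $|\lambda_2|>1$ and therefore $|\lambda_2|^a\ge 1$, so that $1/\lambda_0^a\le |\lambda_2|^a/\lambda_0^a$.

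There is no real obstacle here: the content of the lemma is essentially bookkeeping, an immediate combination of Lemma \ref{Lemme:consequenceestimationy} with (\ref{Equation:flllog}) together with the elementary fact $|\lambda_2|>1$. The only point requiring a word of attention is this last absorption step in the cases $i_0=0,2$; without it, one would get the weaker bound $\BigOh(1/\lambda_0^a)+\BigOh(|\lambda_2|^a/\lambda_0^a)$, which however simplifies to exactly the stated form.
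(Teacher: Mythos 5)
Your proposal is correct and is exactly the argument the paper intends: Lemma \ref{Lemme:consequenceestimationy} expresses $\Lambda$ as the relevant linear form plus an error of the right shape, the bound $(\ref{Equation:flllog})$ controls $|\Lambda|$ itself by $\BigOh(1/\lambda_0^a)$, and the observation $|\lambda_2|>1$ (from $(\ref{equation:boundslambda})$) lets you absorb the $1/\lambda_0^a$ into $|\lambda_2|^a/\lambda_0^a$ in the cases $i_0=0,2$. This is precisely how the paper arrives at the equivalent estimates $(\ref{Equation:formulesavecA',B'})$, of which the lemma is a restatement, so no further comment is needed.
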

 
\begin{proof}[\indent Proof of the part (iii) of Theorem $\ref{Theoreme:principal}$]
 We will take advantage of Lemma $\ref{Lemme:majorationfllcasagrand}$ by using Proposition 2 of \cite{LW}, with 
$$ 
s=3,\quad 
H_1=H_2=\Newcst{kappa:H1etH2agrand} \log n, 
\quad
H_3=3\cst{kappa:H1etH2agrand} (\log m+\log n), 
$$
$$
C'=(|A|+|B|+a) \frac{ \log n}{\log m+\log n} +2.
$$ 
This proposition allows to exhibit a lower bound for the left member of the inequalities of Lemma $\ref{Lemme:majorationfllcasagrand}$, namely 
$$
\exp\{-
\Newcst{kappa:minorationfllagrand} 
(\log m+\log n)(\log n)^2 \log C'
\}. 
$$
We are led to
$$
a \le \Newcst{kappa:majorationa} (\log m+\log n)(\log n)\log C'.
$$
Thanks to $(\ref{Equation:majorationAetB})$ we also have 
$$
 C' 
 \le
(\log n)^3\left(\frac{a\log n}{\log m+\log n} +2\right),
 $$
 hence
 $$
 \frac{a\log n}{\log m+\log n}\le
\cst{kappa:majorationa}
 (\log n)^2
 \left(
 3 \log \log n+ \log 
 \left( \frac{a \log n}{\log m+\log n} +2\right)
 \right).
 $$
 This allows to write 
 $$
a\le \cst{kappa:agrand} (\log m+\log n) (\log n) \log\log n.
$$ 
This proves the part (iii) of Theorem $\ref{Theoreme:principal}$ and also completes the proof of the part (i) 
of Theorem $\ref{Theoreme:principal}$. \end{proof}

\section{Proof of the fourth part of Theorem $\boldmath \ref{Theoreme:principal}$ } \label{Section:Demonstration(iv)}

\begin{proof}[Proof of the part {\rm (iv)} of Theorem $\ref{Theoreme:principal}$]
We may assume 
 $$
 a\ge \cst{kappa:ysurxlambda2}, \quad
 y\ge 1, \quad
\max\left \{
 |x|,\; y 
\right\}^{ \cst{kappa:ysurxlambda2}}
\le 
e^{a}
\quad\hbox{and}\quad 
 4m \le \lambda_0^{a/2}
 $$ 
 with a sufficiently large constant $\Newcst{kappa:ysurxlambda2}$. Using the estimates 
 $$
|x-\lambda_0^a y|
\ge
 \lambda_0^a y -|x|
\ge
 \lambda_0^a y -\frac{1}{2} \lambda_0^a
\ge
\frac{1}{2} \lambda_0^a y 
$$
and
 $$
|x-\lambda_1^a y|\ge |x| - |\lambda_1|^a y\ge |x| -\frac{1}{2} \ge \frac{|x|}{2},
$$
we deduce from the relation 
$$
m=(x-\lambda_0^a y)(x-\lambda_1^a y)(x-\lambda_2^a y)
$$ 
the estimates
\begin{equation}\label{Equation:minorationysurxlambda2a}
0<\left| \lambda_2^a \, \frac{y}{x} -1\right |\le \frac{4m}{x^2 \lambda_0^{a}}\le \lambda_0^{-a/2}.
\end{equation}
Since $|\lambda_2|^a\ge 1$ and $\lambda_0^{-a/2}<1/2$, we deduce $y\le 2|x|$. 
We use Proposition 2 of \cite{LW} with 
$$ 
s=2,\quad 
\gamma_1=\lambda_2,
\quad 
\gamma_2=\frac{y}{x}, 
\quad c_1=a,\quad c_2=1,
$$
$$
H_1= \log (n+2),
\quad
H_2=1+ \log |x|, 
\quad 
C= \frac{ a \log (n+2)}{H_2} +2.
$$
 This gives
 $$
 \left| \gamma_1^{c_1} \gamma_2^{c_2} -1\right|
 \ge 
 \exp\bigl\{
 -\Newcst{kappa:ysurxlambda2a} H_1H_2\log C
 \bigr\}.
 $$
 Combining with ($\ref{Equation:minorationysurxlambda2a}$), we obtain
 $$
 \frac{C}{\log C}\le \Newcst{kappa:ysurxlambda2b} \log(n+2).
 $$
 Hence 
 $$
 a\le \Newcst{kappa:ysurxlambda2c}
 \bigl( 1+\log |x| \bigr) \log \log(n+3).
 $$
 This completes the proof of the part {\rm (iv)} of Theorem $\ref{Theoreme:principal}$.
\end{proof}
 
\section{Some numerical calculations}\label{Section:NumericalCalcutation}

 Fix an integer $n\geq 0$. For all integers $a\geq 0$ the cubic forms
$$
F_{n,a}(X,Y)=X^3-u_aX^2Y+(-1)^a v_aXY^2-Y^3
$$
can be explicitly written by using the recurrence formulas for $u_a$ and $v_a$, namely
$$ \left\{
\begin{array}{lll} 
u_{a+3}&=&(n-1)u_{a+2}+(n+2)u_{a+1}+u_a,\\[2mm]
v_{a+3}&=&(n+2)v_{a+2}-(n-1)v_{a+1}-v_a,
\end{array}\right.
$$
with the the initial conditions 
$$ \left\{
\begin{array}{lll} 
F_{n,0}(X,Y)&=&X^3-3X^2Y+3XY^2-Y^3,
\\[2mm]
F_{n,1}(X,Y)&=&X^3 -(n-1)X^2Y -(n+2)XY^2 -Y^3,
\\[2mm]
F_{n,2}(X,Y)&=&X^3 -(n^2+5)X^2Y+(n^2+2n+6)XY^2-Y^3.
 \end{array}\right.
$$
For instance,
$$\left\{
\begin{array}{lll}
F_{n,3}(X,Y)&=&X^3-\hfill(n^3+6n-4)X^2Y-(n^3+3n^2+9n+11)XY^2-Y^3,
\\[3mm]
F_{n,4}(X,Y)&=&X^3-(n^4+8n^2-4n+13)X^2Y
\\[0mm]
&&\hfill +(n^4+4n^3+14n^2+24n+26)XY^2-Y^3,
\\[3mm]
F_{n,5}(X,Y)&=&X^3-(n^5+10n^3-5n^2+25n-16)X^2Y
\\[0mm]
&&\hfill -(n^5+5n^4+20n^3+45n^2+70n+57)XY^2-Y^3,
\\[3mm]
F_{n,6}(X,Y)&=&X^3-(n^6+12n^4-6n^3+42n^2-30n+38)X^2Y
\\[0mm]
&&\hfill +(n^6+6n^5+27n^4+74n^3+147n^2+186n+129)XY^2-Y^3.
\\[3mm]
\end{array}\right.
$$

Let us make a few remarks about the solutions of the Thue equations 
$$
F_{n,a}(X,Y)=c\quad\mbox{ with} \quad c \in \{+1,-1\} .
$$
For all $n,a\in \Z$ with $a\neq 0$, the only solutions $(x,y)$ with $xy=0$ are given by
$$
F_{n,a}(c,0)=c \quad \mbox{and} \quad F_{n,a}(0,-c)=c,
$$
the solutions $(c,0)$ and $(0,-c) $ being dubbed {\it trivial solutions}.\\

Suppose that for all integers $n\geq 0$ and $a\geq 1$, we know the solutions of 
$F_{n,a}(X,Y)=1$. Then, because of the formulas 
$$\left\{
\begin{array}{rcc}
F_{n,a}(X,Y)&=&-F_{n,a}(-X,-Y),
\\[2mm]
 F_{-n-1,a}(X,Y)&=&F_{n,a}(-Y,-X),
\\[2mm]
\end{array}\right.
$$ 
 we can exhibit, for all integers $ n,\, a \in \Z$ with $a\geq 1$, the solutions of
$$
F_{n,a}(X,Y)=c .
$$
Moreover, because of the formulas
$$
F_{n,-a}(X,Y)=-F_{n,a}(Y,X), 
$$
 we can exhibit, for all $n, \, a\in \Z $ with $a\neq 0$, the solutions of
$$
F_{n,a}(X,Y)= c .
$$
 
The elements of the sequences $\{u_a\}_{a\geq 0} $ and $\{v_a\}_{a\geq 0} $ verify the following properties, which will prove useful in the proof of Proposition $\ref{proposition:fnc1c2=c3}$.

\medskip

\begin{lemme}\label{Lemma:Recurrence}

Let $n$ and $a$ be nonzero integers.

{\rm (i)} For $n\geq 1$ with $a\geq 1$, we have 
$ u_a > 0$, except for $ (n, a) = (1, 1)$ where $u_1=0$.

{\rm (ii)} For $n\geq 1$ with $a\geq 2$, we have $2u_a >nu_{a-1}$, except for
$(n, a) = (1, 3)$ where $2u_3=u_2$.

{\rm (iii)} For $n\geq 1$ with $a\geq 2$, we have $ v_a > u_a+v_{a-1}$, 
except for $(n, a) = (1, 2)$ where $v_2=u_2+v_1$; for $n\geq 0$ we have $v_1=u_1+v_0$; 
moreover,
for $n\geq 0$ and $a\geq 1$, we have $|u_a| \leq v_a$.
 
{\rm (iv)} For $n\geq 0$ with $a \geq 1$, we have $v_a > 2v_{a-1}$
except for $(n,a)=(0,1)$ where $(v_1,v_0)=(2,3)$ and
for $(n,a)=(0,3)$ where $(v_3,v_2)=(11,6)$ and for
 $(n,a)=(1,1)$ where $(v_1,v_0)=(3,3)$.

{\rm (v)} For $n = 0$ with $a\geq 1$, we have
$ 0<(-1)^a u_a \leq \frac12 v_a$,
except for $a=2$ where $(u_2,v_2)=(5,6)$. 
 \end{lemme}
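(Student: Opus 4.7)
The plan is to prove each of the five parts by induction on $a$, with the recurrences
$$u_{a+3}=(n-1)u_{a+2}+(n+2)u_{a+1}+u_a, \qquad v_{a+3}=(n+2)v_{a+2}-(n-1)v_{a+1}-v_a$$
driving the inductive step and the explicit polynomials listed above for $F_{n,0},\ldots,F_{n,6}$ supplying the base cases. Every exceptional pair $(n,a)$ recorded in the statement occurs at a small value of $a$ (at most $a=3$), and so is disposed of inside the finite base check; this leaves a uniform induction to run for all larger $a$.

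For part (i), when $n\ge 2$ the $u$-recurrence expresses $u_{a+3}$ as a positive linear combination of three earlier terms, so positivity propagates immediately; when $n=1$ the recurrence degenerates to $u_{a+3}=3u_{a+1}+u_a$, which still preserves positivity once $u_2$ and $u_3$ have been verified positive by hand. For part (ii) I would substitute the $u$-recurrence into $2u_{a+3}-nu_{a+2}$ to obtain
$$2u_{a+3}-nu_{a+2}=(n-2)u_{a+2}+2(n+2)u_{a+1}+2u_a,$$
which is manifestly positive for $n\ge 2$ by part (i); for $n=1$ a short auxiliary induction combining this identity with the degenerate recurrence handles the remaining range, and $(1,3)$ emerges as the unique instance where the inequality becomes equality.

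For parts (iii) and (iv) the coefficient $-(n-1)$ in the $v$-recurrence prevents a naive monotone induction, so I would carry the two inequalities together as a strengthened simultaneous invariant. The key algebraic step is to rewrite $v_{a+3}-2v_{a+2}$ and $v_{a+3}-u_{a+3}-v_{a+2}$ by substituting both recurrences, and then to control the resulting expressions using the inductive hypothesis $v_{a+2}>2v_{a+1}$, which is precisely what absorbs the negative contribution $-(n-1)v_{a+1}$; part (iii) feeds back into keeping $v_{a+1}$ sufficiently large relative to $u_{a+1}$ to close the loop. Part (v) is essentially a computation at $n=0$: the $u$-recurrence becomes $u_{a+3}=-u_{a+2}+2u_{a+1}+u_a$, which yields by induction the alternating pattern $(-1)^a u_a>0$, and the quantitative bound $(-1)^a u_a\le \tfrac12 v_a$ follows by pairing this with the $n=0$ form $v_{a+3}=2v_{a+2}+v_{a+1}-v_a$ of the $v$-recurrence.

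I expect the main obstacle to be the joint bookkeeping in the simultaneous (iii)--(iv) induction when $n\in\{0,1\}$: these are precisely the values for which the exceptional base indices cluster, where the recurrence coefficients degenerate, and where the strict inequalities in (iii) and (iv) become tight at $a=1$ (and at $a=3$ when $n=0$). The inductive invariant has to be formulated carefully so that the first non-exceptional index supplies a strict inequality of sufficient strength to survive one application of the recurrence; once that delicate transition is cleared, the remaining induction is routine.
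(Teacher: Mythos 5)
Your overall plan — induction on $a$ with base cases from the explicit polynomials and the recurrence driving the step — is exactly the paper's, and your arguments for (i), (ii), (iv), and (v) all go through in essentially the form you describe. In particular for (ii) the paper inducts three steps back and applies the hypothesis to each term of $2u_{a+1}=2(n-1)u_a+2(n+2)u_{a-1}+2u_{a-2}$, whereas you expand $2u_{a+3}-nu_{a+2}$; both work, and your variant gives a cleaner argument for $n\ge 2$. For (iv) and (v) you match the paper nearly step for step.

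The issue is part (iii). You propose to close it by carrying (iii) and (iv) together, using the (iv)-hypothesis $v_{a+2}>2v_{a+1}$ to absorb the negative $v$-term and the (iii)-hypothesis to keep $v$ ahead of $u$. That is not enough. If you expand
$$v_{a+1}-u_{a+1}-v_a = (n+1)v_a-(n-1)v_{a-1}-v_{a-2}-(n-1)u_a-(n+2)u_{a-1}-u_{a-2}$$
and then substitute the (iii)-hypothesis upper bounds $u_j<v_j-v_{j-1}$ for $j=a,a-1,a-2$, you are left with $2v_a-(n+2)v_{a-1}+nv_{a-2}+v_{a-3}$; even after spending (iv) to replace $2v_a$ by $4v_{a-1}$, the remainder is $(2-n)v_{a-1}+nv_{a-2}+v_{a-3}$, which is negative for all moderately large $n$ (the leading terms are asymptotically $(3-n)n^{a-1}$). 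The paper avoids this by a different split: it writes
$$v_{a+1}-u_{a+1}=v_a+2u_a+2v_{a-1}-v_{a-2}-(n+2)u_{a-1}-u_{a-2}+(n+1)(v_a-u_a-v_{a-1}),$$
discards the last nonnegative bracket by the (iii)-hypothesis, uses the (iii)-hypothesis once more on $2v_{a-1}$, and then the pivotal cancellation is $2u_a-nu_{a-1}>0$, i.e.\ \emph{part (ii)}. Part (ii) is indispensable here, and your outline never invokes it in the (iii) step; as written, the induction would not close. Also note that the paper's proof of (iii) does not actually use (iv) at all, so the simultaneous invariant you describe is not what makes (iii) work — the missing ingredient is (ii), not a stronger coupling with (iv).
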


 \begin{proof}[\indent Proof] 
 The proof of 
{\rm (i)} is easy.

{\rm (ii)} Assume $n\ge 1$. 
The inequality $2u_a \ge nu_{a-1}$ is true for $a=2,3,4$ and is strict for these values of $a$ with $(n,a)\not=(1,3)$. 
Suppose now that for some given $a \geq 4$, 
the inequality is true for $a-2$, $a-1$ and $a$. Let us prove it for $a+1$. Using the linear recurrence satisfied by the elements of $\{u_a\}_{a\geq 0}$ and the induction hypothesis, we have, 
\begin{align*}
2u_{a+1}&= 2(n-1)u_a+2(n+2)u_{a-1}+2u_{a-2}
\hfill
\\
&> (n-1)nu_{a-1}+(n+2)nu_{a-2}+nu_{a-3}
\hfill
\\
&=nu_{a-1}
\hfill.
\end{align*}

{\rm (iii)} For $n=0$ we have $v_1=u_1+v_0$. Assume $n\ge 1$. The inequality is true for $a=2,3,4$ and is strict for $a\ge 2$ with $(n,a)\not=(1,2)$. Suppose now that for some given $a \geq 4$ the inequality is true for $a-2$, $a-1$ and $a$. Let us prove it for $a+1$. 
In what follows, we will use the linear recurrence satisfied by the elements of $\{v_a\}_{a\geq 0}$,
the induction hypothesis and the results of the parts {\rm (i)} and {\rm (ii)}:
$$
\begin{array}{l}
v_{a+1}-u_{a+1}
\\[2mm]
= (n+2)v_a -(n-1)v_{a-1}-v_{a-2}-(n-1)u_{a}-(n+2) u_{a-1}-u_{a-2}
\\[2mm]
\qquad 
=v_a+ 2u_a +2v_{a-1} -v_{a-2} -(n+2)u_{a-1}-u_{a-2}+(n+1)(v_a-u_a-v_{a-1})
\\[2mm]
\qquad\qquad 
\ge v_a+ 2u_a +2v_{a-1} -v_{a-2} -(n+2)u_{a-1}-u_{a-2}
\\[2mm]
\qquad\qquad \qquad
 \ge v_a+2u_a+2(v_{a-2}+ u_{a-1})-v_{a-2}-(n+2)u_{a-1}-u_{a-2}
\\[2mm]
\qquad\qquad\qquad
 \qquad 
= v_a+(2u_a-nu_{a-1})+v_{a-2}-u_{a-2}
\\[2mm]
\qquad\qquad\qquad 
 \qquad
\qquad > v_a+v_{a-3}
\\[2mm]
\mbox{}\qquad\qquad\qquad \qquad \qquad \qquad 
> v_a. 
\\[2mm]
\end{array} 
$$
The last assertion is obvious.

{\rm (iv)} Assume that $(n,a)\neq (0,1), (1,1)$ and suppose that $n \geq 0$.
 The proof will be by induction on $a$.
The result is true for $a=1,2,3,4$.
 Assume that for some given $a \geq 4$, the result is true for $a-1$
and $a$. Let us prove it for $a + 1$: 
\begin{align}
\notag
v_{a+1}&=2v_a+nv_a-(n-1)v_{a-1}-v_{a-2}
\\
\notag
&> 2v_a+2nv_{a-1}-(n-1)v_{a-1}-v_{a-2}
\\
\notag
&
=2v_a+(n+1)v_{a-1}-v_{a-2}
\\
\notag
&
>2v_a.
\end{align}

{\rm (v)} Assume $n=0$. 
We have
$$ \left\{
\begin{array}{ccccccccc}
u_0=3,& u_1=-1,& u_2=5,& u_3=-4,& u_4=13, & u_5=-16,
\\[2mm]
v_0=3,& v_1=2,& v_2=6,& v_3=11,& v_4=26, & v_5=57.
\end{array}
\right.
$$ 
The result is true for $a = 1,2,3,4, 5$. Assume $a \geq 5$ and suppose 
that it is
true for $a-1$ and $a$. Let us prove it for $a+1$:
\begin{align*}
(-1)^{a+1}u_{a+1}&=(-1)^{a+1}(-u_a+2u_{a-1}+u_{a-2})
\\[2mm]
& \leq (-1)^a u_a +2(-1)^{a-1}u_{a-1}
\\[2mm]
 &\leq \frac{1}{2}v_a+v_{a-1} \; \leq \frac{1}{2}v_a+\frac{1}{2}v_a\le \frac{1}{2}v_{a+1}\cdotp 
\end{align*} \vskip -.7cm 
 \end{proof}

\smallskip
 
\begin{proposition}\label{proposition:fnc1c2=c3}
Suppose $n\geq 0$ and $a\geq 1$. 
The only cases where 
\linebreak
$F_{n,a}(c_1,c_2)=c$ with $c,c_1,c_2\in \{+1,-1\}$ 
are the following ones:
$$
F_{0,1}(-c,-c) = c,\qquad
F_{0,2}(c,c) = c,\qquad 
F_{n,1}(-c,c) = c \; \mbox{{\sl for any} $n\geq 0$}.
$$
\end{proposition}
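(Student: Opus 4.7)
The plan is to collapse the conditions $F_{n,a}(c_1,c_2)=c$ to a pair of scalar Diophantine equations in $u_a$ and $v_a$, and then use the positivity and growth estimates of Lemma~\ref{Lemma:Recurrence} to exhibit all solutions. Since $c_i^2=1$ and $c_i^3=c_i$ for $c_i\in\{\pm 1\}$, direct substitution into the defining formula of $F_{n,a}$ gives
\[
F_{n,a}(1,1)=(-1)^av_a-u_a\quad\text{and}\quad F_{n,a}(1,-1)=2+u_a+(-1)^av_a,
\]
and the symmetry $F_{n,a}(-X,-Y)=-F_{n,a}(X,Y)$ handles the remaining two corners. So the eight equations $F_{n,a}(c_1,c_2)=c$ collapse to the two scalar conditions $(-1)^av_a-u_a=\pm 1$ (Case A, for $c_1c_2=1$) and $u_a+(-1)^av_a\in\{-1,-3\}$ (Case B, for $c_1c_2=-1$). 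The three listed exceptions evidently satisfy these; the content of the proposition is the reverse.

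I would then split each case by the parity of $a$. In Case A with $a$ even, the condition is $v_a-u_a=\pm 1$: part (iii) of Lemma~\ref{Lemma:Recurrence} gives $v_a\ge|u_a|$, killing the minus sign; then $v_a-u_a>v_{a-1}$ (part (iii)) combined with the iterated growth $v_a>2v_{a-1}$ (part (iv)) forces $v_{a-1}\ge 3$ as soon as $a\ge 3$, leaving only $a=2$. A direct calculation gives $v_2-u_2=2n+1$, equal to $1$ exactly at $n=0$; for $n=0$ with $a\ge 4$ even, part (v) yields $u_a\le v_a/2$ and the growth of $v_a$ drives $v_a-u_a\ge v_a/2>1$. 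This isolates $(n,a)=(0,2)$ and the exception $F_{0,2}(c,c)=c$. In Case A with $a$ odd the condition is $u_a+v_a=\pm 1$: for $n\ge 1$ part (i) gives $u_a\ge 0$ and $u_a+v_a\ge 2$ except at $(1,1)$, where $v_1=3$ gives a direct contradiction; for $n=0$ part (v) yields $u_a+v_a\ge v_a/2$, and the tabulated values of $v_a$ in the proof of (v), together with the iterated bound (iv), restrict the equation $u_a+v_a=1$ to $a=1$, where indeed $u_1+v_1=-1+2=1$. This yields the exception $F_{0,1}(-c,-c)=c$.

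In Case B with $a$ even, parts (i) (for $n\ge 1$) and (v) (for $n=0$) both give $u_a>0$, so $u_a+v_a>0$ and neither target $-1$ nor $-3$ is attained. In Case B with $a$ odd the condition is $v_a-u_a\in\{1,3\}$. Part (iii) gives $v_a-u_a>v_{a-1}$ for $a\ge 2$, and iterating (iv) ensures $v_{a-1}\ge v_2\ge 6$ once $a\ge 3$, excluding both targets; for $n=0$, $a\ge 3$ odd, part (v) gives $v_a-u_a\ge v_a/2\ge 11/2$, which excludes them as well. Only $a=1$ survives, and $v_1-u_1=(n+2)-(n-1)=3$ for every $n\ge 0$, yielding the whole family $F_{n,1}(-c,c)=c$.

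The main obstacle is the careful bookkeeping of the small exceptional pairs $(1,1)$, $(1,2)$, $(1,3)$, $(0,1)$, $(0,2)$, $(0,3)$ flagged in parts (ii)--(v) of Lemma~\ref{Lemma:Recurrence}: each must be checked by hand to confirm it either produces one of the three claimed solutions or gives a value of $F_{n,a}(c_1,c_2)$ outside $\{\pm 1\}$. Once these finitely many cases are settled, the iterated growth estimate (iv) rules out all remaining $a$ uniformly, and the proof closes.
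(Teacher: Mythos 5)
Your proof is correct and follows essentially the same route as the paper: reduce the eight sign choices to the two scalar equations at $(1,1)$ and $(1,-1)$ via the antisymmetry $F_{n,a}(-X,-Y)=-F_{n,a}(X,Y)$, then dispose of $a\ge 3$ by combining the growth and positivity estimates of Lemma~\ref{Lemma:Recurrence} and check $a=1,2$ directly. The only difference is cosmetic: you organize by the parity of $a$ and fill in the inequalities the paper leaves implicit in its phrase ``we deduce from the parts (iii), (iv) and (v) that this does not occur,'' which is a slight gain in explicitness but not a different argument.
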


 \begin{proof} [\indent Proof] 
Since $F_{n,a}(-X,-Y)=-F_{n,a}(X,Y)$, it suffices to investigate when $(1,1)$ and $(-1,1)$ are solutions of $F_{n,a}(X,Y)=c$. Suppose first that $(1,1)$ is a solution.
 The equation we consider is 
 $$
 -u_a+(-1)^a v_a=c.
 $$
 Let $a=1$; this implies $n=0$ and $c=-1$, hence $F_{0,1}(-c,-c)=c$.
Let $a=2$; this implies $n=0$ and $c=1$, hence $F_{0,2}(c,c)=c$.
Finally, in the case $a\ge 3$, we deduce from the parts (iii) and (iv) of Lemma $\ref{Lemma:Recurrence}$
 that this does not occur.\smallskip

 Suppose next that $(-1,1)$ is a solution.
 The equation we consider is 
 $$
 -u_a-(-1)^a v_a=c+2.
 $$
If $a=1$, then $c=1$ and any $n$ is admissible, hence $F_{n,1}(-c,c)=c$.
If $a=2$, this is impossible. Finally, in the case $a\ge 3$, we deduce again from the parts (iii), (iv) and (v) of Lemma $\ref{Lemma:Recurrence}$
 that this does not occur.
 \end{proof}

When $(x,y,m)\in\Z^3$ is given with $xy\not=0$ and $m>0$, the part (iv) of Theorem $\ref{Theoreme:principal}$ shows in an effective way the finiteness of the set of couples $(n,a)$ with $n\ge 0$ and $a\ge 1$ for which $0<|F_n(x,y)|\le m$, 
excluding the cases exhibited in Proposition \ref{proposition:fnc1c2=c3}. Let us give an elementary proof in the case where $y=\pm x$. 

 \begin{lemme}\label{lemme:minorationelementaire}
 Let $x\in \Z$ and $c$ in $\{+1,-1\}$. 
 For $n\ge 1$ and $a\ge 2$, we have
 $$
 |F_{n,a}(x,cx)|\ge \frac{|x|^3}{8} a n^{a-1}.
 $$
 For $n\ge 0$ and $a=1$, we have
 $$
 F_{n,1}(x,x)= - (2n+1) x^3.
 $$
 For $n=0$ and $a\ge 3$, we have
 $$
 |F_{0,a}(x,cx)|\ge |x|^3 2^{a-1}.
 $$
 \end{lemme}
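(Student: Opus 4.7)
The plan is to treat the three cases of the statement separately. The $a=1$ formula is immediate: substituting $Y=x$ into $F_{n,1}(X,Y)=X^3-(n-1)X^2Y-(n+2)XY^2-Y^3$ and collecting the coefficient of $x^3$ yields $1-(n-1)-(n+2)-1=-(2n+1)$.

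For the main case $n\ge 1, a\ge 2$, I would use the factorisation $F_{n,a}(x,cx)=x^3\prod_{i=0}^{2}(1-c\lambda_i^a)$ combined with the reverse triangle inequality $|1-c\lambda_i^a|\ge \bigl||\lambda_i|^a-1\bigr|$ (valid since $|c|=1$), which reduces the problem to lower-bounding $(\lambda_0^a-1)(1-|\lambda_1|^a)(|\lambda_2|^a-1)$. The crucial step is Bernoulli's inequality applied to $|\lambda_2|=1+1/\lambda_0$, giving $|\lambda_2|^a-1\ge a/\lambda_0$. The remaining factors are easy: $\lambda_0^a-1\ge \lambda_0^a/2$ (since $\lambda_0\ge 1.87$ for $n\ge 1$, hence $\lambda_0^a\ge 2$ when $a\ge 2$) and $1-|\lambda_1|^a\ge 3/4$ (since $|\lambda_1|=1/(\lambda_0+1)\le 1/2$ for $n\ge 1$ and $a\ge 2$). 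Multiplying the three bounds gives $\frac{3}{8}\,a\,\lambda_0^{a-1}\ge \frac{3}{8}\,a\,n^{a-1}$, a fortiori $\ge a\,n^{a-1}/8$.

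For the case $n=0, a\ge 3$ I would keep the same factorisation and reverse triangle inequality, but replace the now-vacuous estimate $\lambda_0\ge n$ by explicit numerical bounds on the roots of $f_0(X)=X^3+X^2-2X-1$, namely $\lambda_0\in(1.246,1.248)$, $|\lambda_1|<0.446$ and $|\lambda_2|>1.801$. Each of the three factors can then be bounded, for $a\ge 3$, as a fixed positive multiple of $|\lambda_i|^a$ (or of $1$ for the middle factor) using $1-1/\lambda_0^3$, $1-0.446^3$ and $1-1/1.801^3$. The key identity $\lambda_0|\lambda_2|=\lambda_0+1$ then makes the product telescope to a positive constant (roughly $0.365$) times $(\lambda_0+1)^a>2.246^a$. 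It remains to check $0.365\cdot 2.246^a\ge 2^{a-1}$, which one does at $a=3$ directly (where the ratio is about $4.14/4$) and then extends to every $a\ge 3$ by monotonicity of $(2.246/2)^a$ in $a$.

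The main obstacle is the tightness of the $n=0, a=3$ subcase: the numerical constants must be kept sharp enough to clear the threshold $2^2=4$. The true value of $|f_{0,3}(\pm 1)|$ is $7$, but the estimation above delivers only about $4.14$, so any looseness introduced along the chain of inequalities would break the bound. All other subcases (both $n\ge 1, a\ge 2$ and $n=0, a\ge 4$) enjoy considerable slack.
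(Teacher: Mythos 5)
Your proof is correct, and the overall structure (direct check for $a=1$, the factorization $F_{n,a}(x,cx)=x^3\prod_i(1-c\lambda_i^a)$ combined with a term-by-term lower bound for $a\ge 2$) matches the paper's approach for the main case $n\ge 1$. There you use Bernoulli's inequality with $|\lambda_2|-1 = 1/\lambda_0$ to get $|\lambda_2|^a-1\ge a/\lambda_0$, whereas the paper proceeds via $(1+u)^a-1=e^{a\log(1+u)}-1\ge a\log(1+u)\ge au/2$ and then $u\ge 1/(2n)$; both give the same linear-in-$a$ bound up to a constant and your version is marginally sharper and cleaner. The bounds $\lambda_0^a-1\ge\lambda_0^a/2$ and $1-|\lambda_1|^a\ge 3/4$ and the final passage $\lambda_0^{a-1}>n^{a-1}$ are all fine.

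The genuinely different part is $n=0$, $a\ge 3$. The paper handles this arithmetically: it reduces to a lower bound on $|F_{0,a}(1,c)|=|{-u_a+(-1)^av_a}|$ or $|2+u_a+(-1)^av_a|$, which via part (v) of its Lemma $\ref{Lemma:Recurrence}$ (namely $0<(-1)^au_a\le v_a/2$) is at least $v_a/2$ (resp.\ $v_a-2$), and then grows like $2^a$ by the recursion $v_a>2v_{a-1}$ from part (iv). You instead stay with the analytic factorization, insert numerical bounds $\lambda_0\in(1.246,1.248)$, $|\lambda_1|<0.446$, $|\lambda_2|>1.801$, and use the identity $\lambda_0|\lambda_2|=\lambda_0+1>2.246$ to produce $\frac{|F_{0,a}(x,cx)|}{|x|^3}\ge 0.3648\cdot(2.246)^a$, which clears $2^{a-1}$ starting at $a=3$ (ratio $\approx 4.13/4$) and improves with $a$. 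Both routes work; the paper's is more comfortable because it reuses the $u_a,v_a$ machinery from Section~\ref{Section:NumericalCalcutation} and has plenty of slack, while yours is self-contained analytically but, as you correctly flag, requires the constants to be carried to three decimal places because the reverse triangle inequality $|1-c\lambda_i^a|\ge\bigl||\lambda_i|^a-1\bigr|$ discards the cancellation that actually makes $|F_{0,3}(1,1)|=7$ rather than $\approx 4$.
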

 
 \begin{proof}[\indent Proof] 
 These results are trivial for $x=0$. By homogeneity we may assume $x=1$. 
 For $0<u<1$, we have
 $$
 \log (1+u)\ge \frac{u}{2}
 $$
 and for $t\ge 0$ we have
 $$
 e^t-1\ge t,
 $$
 hence
 $$
 (1+u)^a-1 =\exp \bigl( a \log (1+u)\bigr) - 1 \ge a \log (1+u)\ge \frac{au}{2}\cdotp
 $$
We use this estimate with $u= |\lambda_2|-1$. Recall that 
$$
u\ge \frac{1}{n+1}\ge \frac{1}{2n}\cdotp
$$
Hence 
$$
|1-\lambda_2^a c|\ge |\lambda_2|^a - 1=(1+u)^a-1 \ge \frac{a}{4n}\cdotp
$$
The first part of the lemma follows from the identity
$$
F_{n,a}(1,c)=(1-\lambda_0^a c)(1-\lambda_1^a c)(1-\lambda_2^a c)
$$ 
where 
$$
|(1-\lambda_0^a c)
 (1-\lambda_1^a c)|\ge
 (\lambda_0^a -1)
 (1-\lambda_1^a)\ge 
 \frac{1}{2} n^a.
$$
 
The second part of the lemma $\ref{lemme:minorationelementaire}$ follows from the explicit expression for $F_{n,1}(X,Y)$. 

 For the third part, we obtain the required inequality by using the part (iv) of Lemma $\ref{Lemma:Recurrence}$.
 \end{proof}
 \bigskip

 Let $c\in \{+1, -1\}$ and let $n,a\in \N $ with $a\geq 1$. 
We wonder whether all the solutions $(x,y)\in\Z^2$ of $F_{n,a}(x,y) =c$ are given by
	
$$ \left\{
\begin{array}{l}
\bullet \; (c,0), \; (0,c)\; \mbox{ for any } n\geq 0 \mbox{ and } a\geq 1,
\\[2mm]

\bullet \; (-c,c) \; \mbox{ for any } n\geq 0 \mbox{ and } a=1,
\\[2mm]

\bullet \; (c,c)\mbox{ for } n=0 \mbox{ and } a=2,
\\[2mm]

\bullet \; (-c,-c)\mbox{ for } n =0 \mbox{ and } a=1,
\\[2mm]

\bullet \; \mbox{the exotic solutions}
\\[3mm] 

\quad \begin{array}{c|cccccccc}
(n,a)&&(cx,cy)&&&&&
\\[1mm] \hline 
(0,1)&(-9,5)&(-1,2)&(2,-1)&(4,-9)&(5,4)
\\[1mm]
(0,2)&(-14,-9)&(-3,-1)&(-2,-1)&(1,5)&(3,2)&(13,4)&
\\[1mm]
(0,3)&(2,1)&&&&&&&
\\[1mm]
(0,5)&(-3,-1)&(19,-1)&&&&&&
\\[1mm]
(1,1)&(-3,2)&(1,-3)&(2,1)&&&&
\\[1mm]
(1,2)&(-7,-2)&(-3,-1)&(2,1)&(7,3)&&&&
\\[1mm]
(2,2)&(-7,-1)&(-2,-1)&&&&&&
\\[1mm]
(3,1)&(-7,-2)&(-2,9)&(9,-7)&&&&
\\[1mm]
(4,2)&(3,2)&&&&&&& 
\\[1mm]
\end{array}
\end{array}
\right.
$$
In the above list, the solutions associated to $a=1$ come from \cite{T} and \cite{M}.
Moreover, 
the other solutions were obtained via {\sl MAPLE} 
in the range 
$$
0\leq n\leq 10,
\qquad
2\le a \le 70,
\qquad
-1000\leq x,y \leq 1000.
$$ 

\section*{Acknowledgments} 
 We heartily thank Isao Wakabayashi for his clever remarks on a preliminary version of this paper. 
 \smallskip


\vfill

\vfill

\end{document}